\newcommand{\ncom}{\newcommand}
\ncom{\ul}{\underline}
\ncom{\ol}{\overline}
\ncom{\bq}{\begin{equation}}
\ncom{\eq}{\end{equation}}
\ncom{\beqn}{\begin{eqnarray*}}
\ncom{\eeqn}{\end{eqnarray*}}
\ncom{\beq}{\begin{eqnarray}}
\ncom{\eeq}{\end{eqnarray}}
\ncom{\nno}{\nonumber}
\ncom{\rar}{\rightarrow}
\ncom{\Rar}{\Rightarrow}
\ncom{\noin}{\noindent}
\ncom{\bc}{\begin{centre}}
\ncom{\ec}{\end{centre}}
\ncom{\sz}{\scriptsize}
\ncom{\rf}{\ref}
\ncom{\sgm}{\sigma}
\ncom{\Sgm}{\Sigma}
\ncom{\dt}{\delta}
\ncom{\Dt}{Delta}
\ncom{\lmd}{\lambda}
\ncom{\Lmd}{\Lambda}
\ncom{\eps}{\epsilon}
\ncom{\pcc}{\stackrel{P}{>}}
\ncom{\dist}{{\textrm\,dist}}
\ncom{\sspan}{{\textrm\,span}}
\ncom{\re}{{\textrm Re\,}}
\ncom{\im}{{\textrm Im\,}}
\ncom{\sgn}{{\textrm sgn\,}}
\ncom{\ba}{\begin{array}}
\ncom{\ea}{\end{array}}
\ncom{\eop}{\hfill{{\rule{2.5mm}{2.5mm}}}}
\ncom{\eoe}{\hfill{{\rule{1.5mm}{1.5mm}}}}
\ncom{\eof}{\hfill{{\rule{1.5mm}{1.5mm}}}}
\ncom{\hone}{\mbox{\hspace{1em}}}
\ncom{\htwo}{\mbox{\hspace{2em}}}
\ncom{\hthree}{\mbox{\hspace{3em}}}
\ncom{\hfour}{\mbox{\hspace{4em}}}
\ncom{\hsev}{\mbox{\hspace{7em}}}
\ncom{\vone}{\vskip 2ex}
\ncom{\vtwo}{\vskip 4ex}
\ncom{\vonee}{\vskip 1.5ex}
\ncom{\vthree}{\vskip 6ex}
\ncom{\vfour}{\vspace*{8ex}}
\ncom{\norm}{\|\;\;\|}
\ncom{\integ}[4]{\int_{#1}^{#2}\,{#3}\,d{#4}}
\ncom{\inp}[2]{\langle{#1},\,{#2} \rangle}
\ncom{\Inp}[2]{\Langle{#1},\,{#2} \Langle}
\ncom{\vspan}[1]{{{\textrm\,span}\#1 \}}}
\ncom{\dm}[1]{\displaystyle {#1}}
\newtheorem{theorem}{\bf Theorem}[section]
\newtheorem{corollary}[theorem]{\bf Corollary}
\newtheorem{lemma}[theorem]{\bf Lemma}
\newtheoremstyle
	{remarkstyle}
	{}
	{11pt}
	{}
	{}
	{\bfseries}
	{:}
	{     }
	{\thmname{#1} \thmnumber{#2} }
\theoremstyle{remarkstyle}
\newtheorem{remark}[theorem]{\bf Remark}
\newtheorem{definition}[theorem]{\bf Definition}
\def \Z{\mathbb{Z}}
\def \N{\mathbb{N}}
\title{On the Diophantine equations $f(x)=g(y)$}
\author{S. Subburam and J. Tanti}
\address{Sastra University, Kumbhakonam, Tamilnadu, India,}
\address{Central University of Jharkhand, Ranchi, Jharkhand, India.}
\email[S. Subburam] {ssrammaths@yahoo.com,}
\email[Jagmohan Tanti] {jagmohan.t@gmail.com.}
\date{}
\begin{document}

\begin{abstract}
The study of finiteness or infiniteness of integer solutions of a Diophantine equation has been considered as a standard problem in the literature. In this paper, for $f(x)\in\Z[x]$ monic and $\displaystyle q_1, \cdots, q_m\in\Z$, we study the conditions for 
which the Diophantine equation $$(y+q_1)(y+q_2)\cdots(y+q_m)=f(x)$$ has finitely many solutions in integers. Also assuming $ABC$-Conjecture, we study the conditions for 
finiteness of integer solutions of the Diophantine equation $f(x)=g(y)$. 
\end{abstract}
\maketitle
\renewcommand{\thefootnote}{}
\footnote{ \noindent\textbf{} \vskip0pt
\textbf{2010 Mathematics Subject Classification:}
Primary 11D41, 11D45, Secondary 11D25.
\vskip0pt
\textbf{Key Words:} Diophantine equation, Monic polynomial.
}
\section{Introduction} Consider the equation 
\begin{equation}
 f(x)=g(y),
\label{e1}
\end{equation} where $f(x)$ and $g(x)$ are polynomials  with integral coefficients of degrees $n$ and $m$ respectively. Already we know the following 
natural problem.
\vskip.5mm
\noin{\bf Problem 0}
Does the equation (\ref{e1}) have finitely or infinitely many solutions in integers?
\vskip.5mm

Several mathematicians solved some special cases of this problem. Yuri F. Bilu and Robert F. Tichy in \cite{Bilu} have given a criteria to check whether 
(\ref{e1}) has 
finitely many solutions in integers with a bounded denominator, when $f(x)$ and $g(x)$ are monic. Baker \cite{Ba} gives an upper bound for integral solutions of 
the equation 
\begin{equation}
y^2=a_nx^n+a_{n-1}x^{n-1}+\cdots+a_0,
\label{hypereq}
\end{equation}
where $\displaystyle a_n , a_{n-1}, \cdots, a_0$ are rational integers, $a_n \neq 0$, when $n \geq 5$ and the polynomial on the right separable.
In a major breakthrough, in early seventies, Baker gives an upper bound for integral solutions of the super elliptic equations $y^m = f(x)$. 
Also \cite{Bi}, \cite{Br}, \cite{Bu}, \cite{Po}, \cite{Sc}, \cite{Sp} and 
\cite{Vo} give some upper bounds for some equations of the form $y^m = f(x)$.  Also \cite{Le} tells that (\ref{hypereq}) has finitely many solutions in 
integers, 
if $a_nx^n + a_{n-1}x^{n-1} +\cdots+ a_0$  has distinct zeros and $n \geq 3$.
\vskip.3mm
Our aim in this paper is to study the conditions for which the equation (\ref{e1}) has finitely many solutions in integers. Here  
without assuming $ABC$ Conjecture, we solve the problem 0 for some special type of $f(x)$ and $g(y)$. When we assume $ABC$ conjecture, we cover much more general 
types of diophantine equations of the form (\ref{e1}) including non-monic polynomials $f(x)$ and $g(y)$. The following three theorems are without assuming 
$ABC$-Conjecture. 
\begin{theorem}
 Let $f(x)\in\Z[x]$ be monic, with $\deg f(x)=n>m$ and $\displaystyle q_1, \cdots, q_m\in\Z$.
Then the equation $$(y+q_1)(y+q_2)\cdots (y+q_m)=f(x)$$ has finitely many solutions in integers if and only 
if $|x-y|$ is bounded for all integer solutions $(x,y)$.
\label{mainthmngtm}
\end{theorem}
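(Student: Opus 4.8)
The statement splits into two implications, and the plan is to dispatch the easy one first. If the equation has only finitely many integer solutions, then $\{\,|x-y| : (x,y)\ \text{a solution}\,\}$ is a finite set of non-negative integers and is therefore bounded; this needs nothing beyond the definitions.

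For the reverse implication I would argue by ``slicing'' the solution set according to the difference $t := x - y$. Assume $|x-y|\le C$ for every integer solution $(x,y)$, so that $t$ necessarily lies in the finite set $\{-C,-C+1,\dots,C\}$. Fix one such integer $t$ and substitute $y = x - t$, which turns the equation into $P_t(x)=0$ for
$$P_t(x) := f(x) - (x - t + q_1)(x - t + q_2)\cdots(x - t + q_m) \in \Z[x].$$
Since $f$ is monic of degree $n$ while the product has degree $m < n$, the polynomial $P_t$ has degree exactly $n$; in particular $P_t$ is not the zero polynomial, hence has at most $n$ roots in $\C$. Consequently there are at most $n$ integer solutions with this prescribed value of $t$, and summing over the at most $2C+1$ admissible values of $t$ shows the equation has at most $n(2C+1)$ integer solutions in all.

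I expect the only genuinely load-bearing point to be the claim that $P_t \not\equiv 0$, and this is exactly where the hypothesis $n > m$ enters: if $n$ equalled $m$, the leading terms of $f(x)$ and of $\prod_i (x - t + q_i)$ could cancel, $P_t$ could vanish identically (take $f(x) = \prod_i (x - t_0 + q_i)$), and one would get an infinite family of solutions with $|x-y|$ bounded, defeating the equivalence. Once the strict inequality of degrees is in hand, the argument is a finite count and requires no asymptotic estimates or Diophantine input; an alternative route via the growth comparison $|y|^m \sim |x|^n$ (forcing $|x-y|\to\infty$ along any sequence of solutions with $|x|\to\infty$) also works but is less clean than the polynomial-degree argument above.
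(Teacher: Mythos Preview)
Your proof is correct and follows essentially the same route as the paper: the paper deduces the theorem from its Lemma~2.3, whose proof also fixes the difference $c=y-x$, reduces to a single-variable polynomial identity, and uses $n>m$ to force a nontrivial polynomial (there via the nonvanishing of the leading coefficient $a_n$). The only cosmetic difference is that the paper argues by contradiction and pigeonhole on the value of $c$, whereas you count directly and obtain the explicit bound $n(2C+1)$; the underlying idea is identical.
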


\begin{theorem}
Let $f(x)\in\Z[x]$ be monic, $\deg f(x)=m$, at least one of the roots of $f(x)$ be non integral and let $\displaystyle q_1, q_2, \cdots, q_m\in\Z$. Then the 
equation 
$$(y+q_1)(y+q_2)\cdots (y+q_m)=f(x)$$ 
has finitely many solutions $x$ and $y$ in integers.
\label{mainthmneqm}
\end{theorem}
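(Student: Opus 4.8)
The plan is to argue by contradiction, reducing an infinite family of solutions to a polynomial identity that would force $f$ to have all its roots in $\Z$. Write $g(y):=(y+q_1)(y+q_2)\cdots(y+q_m)$, a monic polynomial in $\Z[y]$ of degree $m=\deg f$, and suppose that $g(y)=f(x)$ has infinitely many integer solutions. Since $g(Y)-f(x)$ has degree $m$ in $Y$, each integer $x$ admits at most $m$ integers $y$ with $g(y)=f(x)$; hence the first coordinates of the solutions are unbounded, and it is enough to obtain a contradiction from a sequence of solutions with $|x|\to\infty$. (Note Theorem~\ref{mainthmngtm} is not available here, as its hypothesis demands $\deg f>m$ whereas now $\deg f=m$; this is precisely where the non-integral-root condition enters, since without it one may take the $q_i$ to be the negatives of the integer roots of a monic $f$ with integer roots, giving $f(x)=f(y)$ with the infinite family $x=y$.)

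The first step is to control how $y$ grows with $x$. Because $f$ is monic of degree $m$, one has $|f(x)|^{1/m}=|x|+O(1)$ as $|x|\to\infty$, and likewise $|g(y)|^{1/m}=|y|+O(1)$ as $|y|\to\infty$; and $g(y)=f(x)$ together with $|f(x)|\to\infty$ forces $|y|\to\infty$ as well. Comparing the two estimates gives $|y|=|x|+O(1)$ uniformly along the solutions. Consequently, for all but finitely many solutions either $y-x$ or $y+x$ lies in one fixed finite set of integers, so by the pigeonhole principle, passing to an infinite subsequence, one of the following holds: (i) $y-x=c$ for a fixed $c\in\Z$ at infinitely many solutions, or (ii) $y+x=c$ for a fixed $c\in\Z$ at infinitely many solutions.

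In case (i), the equality $g(X+c)=f(X)$ holds at infinitely many integers $X$, hence identically in $\Z[X]$; then the roots of $f$ are exactly $-c-q_1,\ldots,-c-q_m$, all integers, contradicting the hypothesis that $f$ has a non-integral root. In case (ii), $g(c-X)=f(X)$ holds identically; but $g(c-X)=(-1)^m(X-c-q_1)\cdots(X-c-q_m)$, so matching leading coefficients forces $m$ to be even, and then $f(X)=(X-c-q_1)\cdots(X-c-q_m)$ again has all roots in $\Z$, a contradiction. Hence the original equation has only finitely many integer solutions.

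The only real content is the asymptotic estimate $|g(y)|^{1/m}=|y|+O(1)$ and its companion for $f$, together with checking the $O(1)$ is uniform over the solution set; once $|y|=|x|+O(1)$ is in hand, the pigeonhole step and the two-case analysis are routine. I therefore expect the write-up to be short, with the bookkeeping over the sign of $x+y$ versus $x-y$ (and the parity of $m$) being the only place requiring any care.
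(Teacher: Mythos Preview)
Your argument is correct, and it is considerably more direct than the paper's. The paper proceeds by building a small theory of ``complete composites'' (Lemmas~\ref{compositelemma}--\ref{completecompthm}): it first shows that if a monic $f$ with positive coefficients has infinitely many values $f(m)$ that factor as $\prod_i(m+b_{im})$ with $b_{im}>0$, then $f$ must split into linear factors over~$\Z$; it then reduces to that situation by shifting to make all coefficients positive and by treating the four sign combinations of $(x,y)$ separately (Lemmas~\ref{meqnposthm}--\ref{meqnnegsolthm} and the final assembly). In effect the paper works hard to rule out the possibility that $|y|-|x|$ is unbounded along the solutions.

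Your route sidesteps all of this: the single estimate $|h(t)|^{1/m}=|t|+O(1)$ for a monic $h$ of degree $m$ immediately gives $||y|-|x||=O(1)$, after which pigeonhole on the finitely many admissible values of $y\pm x$ collapses everything to a polynomial identity $g(X+c)=f(X)$ or $g(c-X)=f(X)$, and either forces $f$ to split over~$\Z$. What you gain is brevity and a uniform treatment of signs; what the paper's approach buys is the auxiliary Lemma~\ref{compositethm}, which may have independent interest. Your handling of the parity of $m$ in case~(ii) is fine: the identity $g(c-X)=f(X)$ already contradicts monicity when $m$ is odd, so nothing further is needed there.
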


\begin{theorem}
 Let $f(x)\in\Z[x]$ be monic with $\deg f(x)=n<m$ and $\displaystyle q_1,\cdots, q_m\in\Z$. Then the equation 
$$(y+q_1)(y+q_2)\cdots (y+q_m)=f(x)$$ has 
finitely many solutions in integers if and only if $|x-y|$ is bounded for all integer solutios $(x,y)$.
\label{mainthmnltm} 
\end{theorem}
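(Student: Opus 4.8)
The plan is to prove the two implications separately; the forward one is immediate and the converse is a finite reduction to counting roots of univariate polynomials.

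For the forward direction, suppose the equation has only finitely many integer solutions $(x_1,y_1),\dots,(x_k,y_k)$. Then $\max_{1\le i\le k}|x_i-y_i|$ is a finite number bounding $|x-y|$ over all solutions, so there is nothing to do.

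For the converse, assume there is a constant $C$ with $|x-y|\le C$ for every integer solution $(x,y)$. Given such a solution, I would set $t=x-y\in\Z$, so that $t\in\{-C,-C+1,\dots,C\}$ and $x=y+t$. Substituting $x=y+t$ into the equation shows that $y$ is an integer root of $P_t(y):=(y+q_1)(y+q_2)\cdots(y+q_m)-f(y+t)\in\Z[y]$. Now $f$ is monic of degree $n$, so $f(y+t)$ is a polynomial in $y$ of degree $n$; since $n<m$, the monomial $y^m$ coming from $(y+q_1)\cdots(y+q_m)$ is not cancelled, and hence $P_t$ has degree exactly $m\ge 1$ and is in particular a \emph{nonzero} polynomial. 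Therefore $P_t$ has at most $m$ integer roots $y$. Letting $t$ range over its $2C+1$ admissible values, we get at most $(2C+1)m$ pairs $(x,y)$, so the equation has finitely many integer solutions, which completes the proof.

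The only step that requires any care is the assertion that $P_t$ is not the zero polynomial, i.e.\ that the leading term $y^m$ survives; this is precisely where the hypothesis $n<m$ enters, and it is the mirror image — with the two sides of the equation interchanged — of the mechanism behind Theorem \ref{mainthmngtm}. It also explains why the borderline case $\deg f=m$, where the leading terms can cancel, needs the extra hypothesis imposed in Theorem \ref{mainthmneqm}. Beyond this observation there is no genuine obstacle.
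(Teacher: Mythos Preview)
Your proof is correct and is essentially the paper's argument. The paper deduces the theorem in one line from Lemma~\ref{mneqn}, and the proof of that lemma is exactly the reduction you wrote: fix the difference $c=y-x$ (your $t$), observe that the resulting univariate polynomial cannot vanish identically because the degrees differ, and conclude that each value of the difference contributes only finitely many solutions. The only cosmetic difference is that the paper argues by contradiction (infinitely many solutions with bounded difference force all coefficients, including the top one, to vanish), whereas you give a direct count of at most $(2C+1)m$ solutions.
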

The following three theorems are assuming $ABC$-Conjecture. 
\begin{theorem}
Let $\displaystyle f_{11}(x), \cdots, f_{1l}(x), f_{21}(x), \cdots, f_{2m}(x), g_1(x), \cdots, g_n(x)\in\Z[x]$, $\displaystyle\alpha_1,\alpha_2,\cdots, \alpha_l, \beta_1,\beta_2,\cdots, \beta_m, \gamma_1,\gamma_2, \cdots, \gamma_n\in\Z^+$. Let 
$$f(x)=f_{11}(x)^{\alpha_1}f_{12}(x)^{\alpha_2}\cdots f_{1l}(x)^{\alpha_l}\pm f_{21}(x)^{\beta_1}f_{22}(x)^{\beta_2}\cdots f_{2m}(x)^{\beta_m}$$
and $$g(y)=g_1(y)^{\gamma_1}g_2(y)^{\gamma_2}\cdots g_n(y)^{\gamma_n},$$
with 
\begin{enumerate}
 \item $\deg g(y)<\deg f(x)$
\item $\deg g(y)>1 + \deg (f_{11}(x)f_{12}(x) \cdots f_{1l}(x) f_{21}(x) \cdots f_{2m}(x) g_1(x) \cdots g_n(x))$
\item $f_{11}(x)f_{12}(x) \cdots f_{1l}(x)$ and $f_{21}(x) \cdots f_{2m}(x)$ have no common factor in $\Z[x]$
\item leading coefficient of each of $f_{11}(x) \cdots f_{1l}(x)$, $f_{21}(x) \cdots f_{2m}(x)$,\\ $g_1(x) \cdots g_n(x)$ is positive.
\end{enumerate}
If we assume $ABC$-Conjecture, then the equation 
\begin{equation*}
f(x)=g(y)
\end{equation*}
has finitely many solutions $(x,y)$ in integers.
\label{abcthm1}
\end{theorem}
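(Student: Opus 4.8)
The plan is to apply the $ABC$-Conjecture to the three integers produced by the two summands of $f$ and by $g(y)$. Suppose, towards a contradiction, that $f(x)=g(y)$ has infinitely many integer solutions, and write
$$A=A(x)=\prod_{i=1}^{l}f_{1i}(x)^{\alpha_i},\qquad B=B(x)=\prod_{j=1}^{m}f_{2j}(x)^{\beta_j},\qquad C=C(y)=g(y)=\prod_{k=1}^{n}g_k(y)^{\gamma_k},$$
so that every solution satisfies $A\pm B=C$. First I would observe that $x$ is unbounded along the solution set (otherwise some value $x_0$ recurs and $g(y)=f(x_0)$ has infinitely many solutions $y$, impossible since $\deg g\ge 2$ by (2)); hence, along a subsequence of solutions, $|x|\to\infty$, and then $|f(x)|\to\infty$ forces $|y|\to\infty$ too. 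Throughout I will use the elementary estimates $|h(t)|\asymp|t|^{\deg h}$ for large $|t|$, applied to $f$, $g$ and to each $f_{1i},f_{2j},g_k$; in particular $f(x)=g(y)$ gives $|x|^{\deg f}\asymp|y|^{\deg g}$, i.e.\ $|x|\ll|y|^{\deg g/\deg f}$.

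The first real step is to bound $d:=\gcd(A(x),B(x))$ by a constant $d_0$ independent of $x$. By (3) the polynomials $f_{1i}$ and $f_{2j}$ are coprime in $\Q[x]$, so $r_{ij}:=\mathrm{Res}(f_{1i},f_{2j})$ is a nonzero integer with $u_{ij}f_{1i}+v_{ij}f_{2j}=r_{ij}$ for suitable $u_{ij},v_{ij}\in\Z[x]$; hence $\min\{v_p(f_{1i}(x)),v_p(f_{2j}(x))\}\le v_p(r_{ij})$ for every prime $p$ and integer $x$ with $f_{1i}(x)f_{2j}(x)\ne 0$. A short case distinction (if some $v_p(f_{1i}(x))$ exceeds $R_p:=\max_{i,j}v_p(r_{ij})$ then all $v_p(f_{2j}(x))\le R_p$, and symmetrically) yields $\min\{v_p(A(x)),v_p(B(x))\}\le R_p\cdot\max\{\sum_i\alpha_i,\sum_j\beta_j\}$, and only the finitely many primes dividing some $r_{ij}$ contribute, giving $d\le d_0$. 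Writing $A=dA'$, $B=dB'$, $C=dC'$, the integers $A',B',C'$ are pairwise coprime (since $\gcd(A,C)=\gcd(A,\pm B)=\gcd(A,B)$, and likewise for $B,C$), so after taking absolute values and rearranging signs we obtain a relation $P+Q=R$ of positive pairwise coprime integers with $\{P,Q,R\}=\{|A'|,|B'|,|C'|\}$ and $R=\max$. Since $\mathrm{rad}(A)=\mathrm{rad}\big(\prod_i f_{1i}(x)\big)\le\prod_i|f_{1i}(x)|$ (the exponents $\alpha_i$ drop out) and similarly for $B$ and $C$, this gives $\mathrm{rad}(PQR)\le\mathrm{rad}(ABC)\ll|x|^{D}|y|^{E}$, where $D=\deg(f_{11}\cdots f_{1l}f_{21}\cdots f_{2m})$ and $E=\deg(g_1\cdots g_n)$ — which is precisely why (2) is phrased with these (exponent‑free) degrees.

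Now the $ABC$-Conjecture applied to $P+Q=R$ gives, for each $\eps>0$, $R\ll_\eps\mathrm{rad}(PQR)^{1+\eps}$. Using $R\ge|C'|=|C|/d\ge|g(y)|/d_0\gg|y|^{\deg g}$ together with $|x|\ll|y|^{\deg g/\deg f}$, this becomes
$$|y|^{\deg g}\ \ll_\eps\ \big(|x|^{D}|y|^{E}\big)^{1+\eps}\ \ll_\eps\ |y|^{\,(D\deg g/\deg f+E)(1+\eps)}.$$
If $\deg g>(D\deg g/\deg f+E)(1+\eps)$ for some $\eps>0$, this bounds $|y|$, contradicting $|y|\to\infty$. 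Such an $\eps$ exists provided the strict inequality $\deg g>D\deg g/\deg f+E$ holds, equivalently $\deg f(\deg g-E)>D\deg g$. And indeed, by (2) we have $\deg g-E>1+D$, so $\deg f(\deg g-E)>\deg f(1+D)=D\deg g+D(\deg f-\deg g)+\deg f>D\deg g$, using (1) and $\deg f\ge 1$. This yields the contradiction and hence the finiteness statement.

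I expect the main obstacle to be the first real step: controlling $\gcd(A(x),B(x))$ through the resultants even though the individual $p$-adic valuations of $A(x)$ and $B(x)$ are themselves unbounded, and — relatedly — verifying that the sign/gcd rearrangement genuinely produces a positive pairwise‑coprime $ABC$-triple whose radical is still $\ll|x|^{D}|y|^{E}$. Once those points are secured, the degree bookkeeping that reduces everything to hypotheses (1) and (2) is only elementary algebra.
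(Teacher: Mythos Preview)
Your argument is correct and follows the same overall strategy as the paper: apply the $ABC$-Conjecture to the triple formed by the two summands of $f(x)$ and by $g(y)$, bound the radical by $|x|^{D}|y|^{E}$ with $D=\deg(f_{11}\cdots f_{1l}f_{21}\cdots f_{2m})$ and $E=\deg(g_1\cdots g_n)$, and derive a degree contradiction from hypotheses (1) and (2).

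The execution differs in two places worth noting. First, the paper simply asserts that $\gcd\big(\prod_i f_{1i}(a),\prod_j f_{2j}(a)\big)=1$ for infinitely many solutions, appealing loosely to the division algorithm; your resultant argument, bounding the gcd by a fixed constant $d_0$ and then passing to $A',B',C'$, is the rigorous way to handle this and fixes a genuine soft spot in the paper's proof. Second, to compare $|x|$ and $|y|$ the paper reduces to positive solutions and uses the crude inequality $a\le b$ (so that one may replace $a$ by $b$ inside the radical bound and contradict (2) directly), whereas you use the sharper relation $|x|\ll|y|^{\deg g/\deg f}$ coming from $f(x)=g(y)$; this forces you into the small algebraic manipulation $\deg f(\deg g-E)>\deg f(1+D)>D\deg g$, which is where (1) enters your argument. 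Both routes work; yours avoids the separate positive/negative case split that the paper performs at the end, and your handling of signs and coprimality via the rearranged triple $P+Q=R$ is clean.
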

\begin{theorem}
Let $\displaystyle f_{11}(x), \cdots, f_{1l}(x), f_{21}(x), \cdots, f_{2m}(x), g_1(x), \cdots, g_n(x)\in\Z[x]$, $\displaystyle\alpha_1,\alpha_2,\cdots, \alpha_l, \beta_1,\beta_2,\cdots, \beta_m, \gamma_1,\gamma_2, \cdots, \gamma_n\in\Z^+$. Let 
$$f(x)=f_{11}(x)^{\alpha_1}f_{12}(x)^{\alpha_2}\cdots f_{1l}(x)^{\alpha_l}\pm f_{21}(x)^{\beta_1}f_{22}(x)^{\beta_2}\cdots f_{2m}(x)^{\beta_m}$$
and $$g(y)=g_1(y)^{\gamma_1}g_2(y)^{\gamma_2}\cdots g_n(y)^{\gamma_n},$$
with 
\begin{enumerate}
 \item $\deg g(y)>\deg f(x)$
\item $\deg f(x)>1 + \deg (f_{11}(x)f_{12}(x) \cdots f_{1l}(x) f_{21}(x) \cdots f_{2m}(x) g_1(x) \cdots g_n(x))$
\item $f_{11}(x)f_{12}(x) \cdots f_{1l}(x)$ and $f_{21}(x) \cdots f_{2m}(x)$ have no common factor in $\Z[x]$
\item leading coefficient of each of $f_{11}(x) \cdots f_{1l}(x)$, $f_{21}(x) \cdots f_{2m}(x)$,\\ $g_1(x) \cdots g_n(x)$ is positive.
\end{enumerate}
If we assume $ABC$-Conjecture, then the equation 
\begin{equation*}
f(x)=g(y)
\end{equation*}
has finitely many solutions $(x,y)$ in integers.
\label{abcthm2}
\end{theorem}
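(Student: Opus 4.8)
The plan is to run essentially the $ABC$-argument that proves Theorem~\ref{abcthm1}, but with the roles of $\deg f$ and $\deg g$ interchanged. Write $A(x)=f_{11}(x)^{\alpha_1}\cdots f_{1l}(x)^{\alpha_l}$ and $B(x)=f_{21}(x)^{\beta_1}\cdots f_{2m}(x)^{\beta_m}$, so that $f(x)=A(x)\pm B(x)$, and abbreviate $d_f=\deg f$, $d_g=\deg g$, $P=\deg\bigl(f_{11}\cdots f_{1l}f_{21}\cdots f_{2m}\bigr)$ and $Q=\deg\bigl(g_1\cdots g_n\bigr)$. Hypothesis (2) forces $d_f\ge 2$ and then (1) forces $d_g\ge 3$. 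Given an integer solution $(x,y)$ of $f(x)=g(y)$ with $|x|$ large, I would feed the identity $A(x)\pm B(x)=g(y)$ into the $ABC$-Conjecture and read off an upper bound for $|x|$.

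The first step is to reduce to a coprime $ABC$-triple. By (3) the polynomials $\prod_i f_{1i}$ and $\prod_j f_{2j}$ have a nonzero resultant $\rho\in\Z$, and a B\'ezout identity in $\Z[x]$ shows that every prime dividing $\gcd(A(x),B(x))$ divides $\rho$, with multiplicity bounded in terms of $\rho$ and the $\alpha_i,\beta_j$ alone; hence $g_0:=\gcd(A(x),B(x))$ divides a fixed integer independent of $x$. As $g_0\mid f(x)=g(y)$ as well, the integers $a=A(x)/g_0$, $b=B(x)/g_0$, $c=g(y)/g_0$ are pairwise coprime and, after a routine sign adjustment justified by (3) and (4), give a genuine relation $X+Y=Z$ with $\{X,Y,Z\}=\{|a|,|b|,|c|\}$ and $Z=\max(|a|,|b|,|c|)$. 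The finitely many $x$ with $abc=0$ or $\min(|a|,|b|,|c|)=1$ are harmless: each pins $x$, or first $y$ and then $x$, to a finite set, since $f$ and $g$ are non-constant. So $ABC$ is applicable.

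For the size estimate, note that the radical is insensitive to exponents, so $\mathrm{rad}(abc)\le\mathrm{rad}(A(x))\,\mathrm{rad}(B(x))\,\mathrm{rad}(g(y))\le\bigl(\prod_i|f_{1i}(x)|\bigr)\bigl(\prod_j|f_{2j}(x)|\bigr)\bigl(\prod_k|g_k(y)|\bigr)\ll|x|^{P}\,|y|^{Q}$, whereas $Z\ge|c|=|g(y)|/g_0\gg|g(y)|=|f(x)|\gg|x|^{d_f}$. Fixing $\epsilon>0$ and applying the $ABC$-Conjecture to $X+Y=Z$ therefore yields $|x|^{d_f}\ll Z\ll_\epsilon\mathrm{rad}(XYZ)^{1+\epsilon}=\mathrm{rad}(abc)^{1+\epsilon}\ll_\epsilon\bigl(|x|^{P}|y|^{Q}\bigr)^{1+\epsilon}$. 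Since in addition $|f(x)|=|g(y)|$ forces $|y|\ll|x|^{d_f/d_g}$, we obtain $|x|^{d_f}\ll_\epsilon|x|^{(1+\epsilon)(P+Q\,d_f/d_g)}$. It now suffices to verify the strict inequality $P+Q\,d_f/d_g<d_f$, because then a sufficiently small $\epsilon$ makes the right-hand exponent less than $d_f$ and bounds $|x|$. Clearing $d_g$, the inequality reads $Pd_g+Qd_f<d_fd_g$, and indeed $Pd_g+Qd_f\le(P+Q)d_g<(d_f-1)d_g<d_fd_g$, using $d_f<d_g$ from (1) and $P+Q<d_f-1$ from (2). Hence $|x|$ is bounded over all integer solutions, so $|y|$ is bounded too, and only finitely many pairs $(x,y)$ survive.

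The genuinely delicate point is the reduction step: showing that $\gcd(A(x),B(x))$ is bounded uniformly in $x$ --- so that collapsing to the coprime triple $(a,b,c)$ costs nothing in the estimates --- and checking that the degenerate triples (a vanishing entry, a unit entry, or the cancellation of top-degree coefficients permitted a priori by the $\pm$) account for only finitely many $x$. Everything afterwards is the degree arithmetic above, which runs exactly parallel to the proof of Theorem~\ref{abcthm1}.
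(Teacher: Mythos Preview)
Your argument is correct and follows essentially the same route as the paper: feed the relation $A(x)\pm B(x)=g(y)$ into $ABC$, bound the radical by $|x|^{P}|y|^{Q}$ via the exponent-insensitivity of $\mathrm{rad}$, compare $|x|$ and $|y|$ using $\deg g>\deg f$, and contradict hypothesis~(2). The differences are only in execution. The paper first reduces to positive solutions and handles the four sign patterns $(\pm x,\pm y)$ separately at the end, whereas you work uniformly with absolute values; the paper uses the cruder comparison $b\le a$ where you use $|y|\ll|x|^{d_f/d_g}$ (either suffices); and your resultant/B\'ezout reduction to a bounded $g_0=\gcd(A(x),B(x))$ is more careful than the paper's appeal to ``division algorithm'' for the coprimality step. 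Your verification of $Pd_g+Qd_f<d_fd_g$ is exactly the degree arithmetic the paper carries out implicitly when it replaces $g_i(b)$ by $g_i(a)$ after establishing $b\le a$.
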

\begin{theorem}
 Let $f(x), g(x) \in Z[x]$, $f(x)$ be separable with $\deg f(x)>2$ and $$g(y)=g_1(y)^{\gamma_1}g_2(y)^{\gamma_2}\cdots g_n(y)^{\gamma_n},$$ where 
$\gamma_1$, $\gamma_2$, ..., $\gamma_n$ are positive integers $\geq$ 2, $g_{i}(x) \in Z[x]$ for $ i = 1, 2, ..., k$. If assume $ABC$ Conjecture, 
then the equation $$ f(x) = g(y)$$ has only finitely many solutions in integers.
\label{abcthm3} 
\end{theorem}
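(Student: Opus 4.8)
The plan is to play the radical of the common value $N:=f(x)=g(y)$ against itself: the factored shape of $g$ keeps $\operatorname{rad}(N)$ small, whereas separability of $f$, via $ABC$, keeps it large. First some harmless normalisations: I would assume $f$ monic (its leading coefficient contributes only a bounded factor throughout) and, after absorbing any constant $g_i$ into a fixed non--zero integer $c^{\ast}$, that every $g_i$ is non--constant; also $g$ is non--constant, so each value of $g$ is attained only finitely often. Write $d=\deg f\ge3$, $D=\sum_i\deg g_i$, $E=\sum_i\gamma_i\deg g_i$, so $E\ge 2D$. Solutions with $N=0$ are finite in number (there $f(x)=0$ has $\le d$ roots and $g(y)=0$ finitely many), so assume $N\ne 0$. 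For every prime $p$, $v_p(N)=\sum_i\gamma_i\,v_p(g_i(y))$ and each non--zero term is $\ge\gamma_i\ge2$; hence $N$ is a powerful integer, and $\operatorname{rad}(N)\mid g_1(y)\cdots g_n(y)$. Therefore $\operatorname{rad}(N)\ll|y|^{D}$ while $|N|\gg|y|^{E}$, giving $\operatorname{rad}\!\big(f(x)\big)=\operatorname{rad}(N)\ll|f(x)|^{D/E}$ with $D/E\le\tfrac12$.

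Next I would split on whether $E\ge 2D$ is strict. If $E>2D$ (equivalently, some $\gamma_i\ge3$), then $D/E\le\tfrac12-\eta$ for the fixed $\eta:=\tfrac1{2E}>0$, so $\operatorname{rad}(f(x))\ll|f(x)|^{1/2-\eta}$. Against this I would establish the $ABC$--consequence: \emph{for separable $f$ of degree $d$, $\operatorname{rad}(f(x))\gg_{\varepsilon}|f(x)|^{1/2-\varepsilon}$ for every integer $x$ with $f(x)\ne0$}. To prove it, pass to a number field $K$ over which $f$ splits, $f=\prod_{j=1}^d(x-\alpha_j)$ with distinct $\alpha_j$, and for each pair $i\ne j$ apply $ABC$ over $K$ (its number--field form) to the identity $(x-\alpha_i)-(x-\alpha_j)=\alpha_j-\alpha_i$; after dividing out the bounded common factor and discarding the fixed factor $\alpha_j-\alpha_i$ this yields $\operatorname{rad}_K\!\big((x-\alpha_i)(x-\alpha_j)\big)\gg_{\varepsilon}H(x)^{1-\varepsilon}$. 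Multiplying the $\binom d2$ such inequalities (each $x-\alpha_j$ occurs with exponent $d-1$) gives $\prod_j\operatorname{rad}_K(x-\alpha_j)\gg_{\varepsilon}H(x)^{\frac d2(1-\varepsilon)}$, and then, using that $x-\alpha_i$ and $x-\alpha_j$ are coprime away from the primes dividing $\operatorname{disc}f$ and that each rational prime contributes to $\operatorname{rad}_K$ at most the $[K:\Q]$--th power of its contribution to $\operatorname{rad}_{\Z}(f(x))$, one recovers $\operatorname{rad}(f(x))\gg_{\varepsilon}|x|^{d/2-\varepsilon}\asymp|f(x)|^{1/2-\varepsilon}$. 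Choosing $\varepsilon<\eta$ and comparing with the $g$--side bound forces $|f(x)|=O(1)$, hence only finitely many $x$, hence only finitely many $y$.

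The remaining case is the borderline $E=2D$, i.e.\ $\gamma_i=2$ for all $i$. Then $g(y)=c^{\ast}h(y)^{2}$ with $h:=g_1\cdots g_n$, so $z^{2}=c^{\ast}f(x)$ where $z:=c^{\ast}h(y)\in\Z$. Since $c^{\ast}f\in\Z[x]$ is separable of degree $d\ge3$, the equation $z^2=c^{\ast}f(x)$ has only finitely many solutions $(x,z)\in\Z^{2}$ by \cite{Le}; in particular $x$ lies in a finite set, whence so does $y$. Note that this case requires no appeal to $ABC$.

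I expect the main obstacle to be the $ABC$--consequence above: extracting the exponent $\tfrac12-\varepsilon$ \emph{uniformly} for a separable $f$ that need not split over $\Q$, with all the ``$\gcd$'' and conductor contributions honestly absorbed into the implied constants. It is precisely because $ABC$ (used this way) cannot push this exponent strictly past $\tfrac12$ that the case $E=2D$ — where $g$ is, up to a constant, a perfect square — has to be peeled off and dispatched by a Siegel--type result, here \cite{Le}, rather than by $ABC$; a reader aiming at a fully $ABC$--only proof would instead have to supply a sharper radical lower bound for $f(x)$, which does not seem available in this generality.
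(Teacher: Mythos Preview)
Your argument is correct, but it takes a considerably longer path than the paper's.  The paper's proof is essentially two lines: since each $\gamma_i\ge 2$, the value $g(b)$ is a powerful integer for all but finitely many $b$; and Remark~\ref{remark} (the $ABC$--consequence quoted from Granville~\cite{Gr}) says that a separable polynomial of degree $>2$ takes powerful values at only finitely many integers.  That is the whole proof.

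The reason your route is longer is that your pairwise application of number--field $ABC$ to the identities $(x-\alpha_i)-(x-\alpha_j)=\alpha_j-\alpha_i$ yields only $\operatorname{rad}(f(x))\gg_\varepsilon |f(x)|^{1/2-\varepsilon}$.  This is correct but lossy: multiplying $\binom{d}{2}$ inequalities and taking the $(d-1)$--th root throws away information.  The standard $ABC$ consequence (Langevin; this is also what underlies Lemma~\ref{abclemma} and Remark~\ref{remark} via~\cite{Gr}) gives the sharper bound $\operatorname{rad}(f(x))\gg_\varepsilon |x|^{d-1-\varepsilon}\asymp |f(x)|^{(d-1)/d-\varepsilon}$, and for $d\ge 3$ the exponent $(d-1)/d\ge 2/3$ already exceeds $1/2$ strictly.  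With that bound in hand your case split evaporates: the $g$--side inequality $\operatorname{rad}(f(x))\ll |f(x)|^{D/E}\le |f(x)|^{1/2}$ is contradicted outright, and there is no need to peel off the all--squares case $E=2D$ or to invoke~\cite{Le}.  In particular, your closing remark that ``a sharper radical lower bound for $f(x)$ \dots\ does not seem available in this generality'' is not accurate --- it is available, and it is exactly the content of Remark~\ref{remark}.

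That said, your detour has one pleasant by--product: in the borderline case $E=2D$ (every $\gamma_i=2$) your reduction to $z^2=c^\ast f(x)$ and appeal to~\cite{Le} makes that case \emph{unconditional}, whereas the paper's uniform argument uses $ABC$ throughout.
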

As an immediate consequence we have following result.
\begin{corollary}
Assume $ABC$-Conjecture. Let $f(x)\in\Z[x]$ be nonconstant and $\displaystyle q_1, \cdots, q_m\in\Z$.
\begin{enumerate}
\item If $f(x)$ is separable with $\deg f(x)>2$, and for each $j$, $1\leq j\leq m$, $(y+q_i)^2\mid \Pi_{i=1}^m (y+q_i)$, then the equation 
$$\displaystyle(y+q_1)\cdots(y+q_m)=f(x)$$ 
has finitely many integer solutions.
\item
If $m>2$, $\displaystyle q_1, \cdots, q_m$ are distinct and $f(x)=\Pi_{i=1}^kf_i^{l_i}(x)$, where $f_i(x)\in\Z[x]$ and 
$l_i\geq2$ for $1\leq i\leq k$, then the equation $$\displaystyle(y+q_1)\cdots(y+q_m)=f(x)$$ has finitely many integer solutions.
\end{enumerate}
\end{corollary}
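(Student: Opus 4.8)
The plan is to obtain both parts as direct specializations of Theorem~\ref{abcthm3}, the only work being to rewrite the product $(y+q_1)\cdots(y+q_m)$ in the grouped form $g_1(y)^{\gamma_1}\cdots g_n(y)^{\gamma_n}$ (for part (1)) and to observe that the equation $f(x)=g(y)$ is symmetric under interchanging its two sides (for part (2)).

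For part (1), I would let $r_1,\dots,r_n$ be the distinct elements among $q_1,\dots,q_m$ and let $\gamma_j$ be the number of indices $i$ with $q_i=r_j$, so that $(y+q_1)\cdots(y+q_m)=\prod_{j=1}^n (y+r_j)^{\gamma_j}$. The hypothesis that $(y+q_i)^2$ divides $\prod_{i=1}^m(y+q_i)$ for every $i$ is exactly the statement that each $\gamma_j\ge 2$. Putting $g_j(y)=y+r_j\in\Z[y]$, the polynomial $g(y)$ then has precisely the shape demanded by Theorem~\ref{abcthm3}, while $f(x)$ is separable of degree $>2$ by assumption; so Theorem~\ref{abcthm3} applies verbatim and yields finiteness.

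For part (2), note that since $q_1,\dots,q_m$ are distinct, $g(y)=(y+q_1)\cdots(y+q_m)$ is separable with $\deg g=m>2$, so $g$ fulfils the ``separable of degree $>2$'' hypothesis, while $f(x)=\prod_{i=1}^k f_i(x)^{l_i}$ with every $l_i\ge 2$ fulfils the hypothesis on the second polynomial of Theorem~\ref{abcthm3}. Since $(x,y)\mapsto(y,x)$ is a bijection between the set of integer solutions of $f(x)=g(y)$ and that of $g(X)=f(Y)$, I would apply Theorem~\ref{abcthm3} to the equation $g(X)=f(Y)$, with $g$ playing the role of the separable polynomial and $f$ the role of the power-product; the resulting finiteness transports back to $f(x)=g(y)$.

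I do not expect a genuine obstacle: all of the analytic content — the reduction of $f(x)=g(y)$ to an $ABC$-type estimate and the derivation of an explicit bound on $x$ and $y$ — has already been carried out in the proof of Theorem~\ref{abcthm3}. The only two points that require a sentence of care are (a) that ``every linear factor $(y+q_i)$ occurs with multiplicity at least $2$'' is literally the condition that the grouped exponents $\gamma_j$ are all $\ge 2$, and (b) that the hypotheses of Theorem~\ref{abcthm3} are \emph{not} symmetric in its two polynomials, so in part (2) one must invoke the theorem with $f$ and $g$ in the correct slots, which is legitimate precisely because the equation $f(x)=g(y)$ is unchanged by swapping its two sides.
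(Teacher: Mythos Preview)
Your proposal is correct and matches the paper's approach: the paper states this corollary as ``an immediate consequence'' of Theorem~\ref{abcthm3} without further argument, and you have simply spelled out the (straightforward) details of that deduction. In particular, your grouping of repeated linear factors for part~(1) and your swap of the roles of $f$ and $g$ for part~(2) are exactly the minor bookkeeping moves that the paper leaves implicit.
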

Our paper contains five sections. In the second section we state $ABC$-Conjecture. Here We also establish some results to equation (\ref{e1}). 
In the third Section we study the conditions in Cases  $m<n$ and $m>n$, for which the equation (\ref{e1}) has finitely many solutions in integers.
In fourth section, we study Case $m=n$. 
In the fifth section, we study the conditions assuming $ABC$-Conjecture for which the equation (\ref{e1}) has finitely many solutions in integers.
\section{Preliminaries} 
\vskip3mm
\noin {\bf The $ABC$-Conjecture.} \textit{(Oesterl$\acute{e}$, Masser, Szpiro)}: Fix $\epsilon>0$. If $\displaystyle a, b, c$ are coprime positive integers 
satisfying $a+b=c$ then $$c\ll_{\epsilon} N(a,b,c)^{1+\epsilon},$$ where $N(a,b,c)$ is the product of distinct prime divisors of $abc$.
\vskip.5mm
The following result is a consequence of $truth$ of $ABC$-conjecture and is a part of Theorem 6 in \cite{Gr}.
\begin{lemma}
Assume that the $ABC$-Conjecture is true. Fix $\epsilon>0$. If $g(x)\in\Z[x]$ has no repeated roots and $q^2\mid g(m)$, then $q\ll_\epsilon |m|^{1+\epsilon}$.
\label{abclemma}
\end{lemma}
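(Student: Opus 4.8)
The plan is to convert the divisibility hypothesis into a lower bound for the radical of $g(m)$ and then to supply that bound from the $ABC$-Conjecture. First I would record the elementary reduction. Assume $g(m)\neq 0$ (the finitely many integer zeros of $g$ are harmless) and write $g(m)=q^{2}s$ with $s\in\Z$. Every prime dividing $g(m)$ divides $qs$, and since the radical is submultiplicative,
$$\mathrm{rad}(g(m))=\mathrm{rad}(q^{2}s)\le \mathrm{rad}(q)\,\mathrm{rad}(s)\le q\,|s|=\frac{|g(m)|}{q},$$
so that $q\le |g(m)|/\mathrm{rad}(g(m))$. Because $g$ has a fixed degree $d=\deg g$ and fixed leading coefficient $a_d$, we have $|g(m)|\asymp |m|^{d}$ for $|m|$ large. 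Hence the whole Lemma reduces to the radical estimate $\mathrm{rad}(g(m))\gg_{\epsilon}|m|^{\,d-1-\epsilon}$: feeding this back gives $q\ll_{\epsilon}|m|^{d}/|m|^{\,d-1-\epsilon}=|m|^{1+\epsilon}$, which is exactly the claim.

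For the radical estimate I would pass to the splitting field $L$ of $g$, with ring of integers $\mathcal{O}_{L}$, and factor $g(x)=a_d\prod_{i=1}^{d}(x-\alpha_i)$ with the $\alpha_i\in\mathcal{O}_{L}$ distinct; this is the one place where the hypothesis that $g$ has no repeated roots is used. The distinctness of the roots furnishes, for any three indices, the linear dependence
$$(\alpha_2-\alpha_3)(m-\alpha_1)+(\alpha_3-\alpha_1)(m-\alpha_2)+(\alpha_1-\alpha_2)(m-\alpha_3)=0,$$
an $S$-unit type relation whose three terms have size $\asymp |m|$ and whose radical is, up to the fixed nonzero constants $\alpha_i-\alpha_j$, that of $(m-\alpha_1)(m-\alpha_2)(m-\alpha_3)$. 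Applying the $ABC$-Conjecture over the number field $L$ to this relation bounds $|m|$ by $\mathrm{rad}\big((m-\alpha_1)(m-\alpha_2)(m-\alpha_3)\big)^{1+\epsilon}$, giving one factor's worth of radical savings.

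It then remains to assemble these local gains into the global bound $\mathrm{rad}(g(m))\gg_{\epsilon}|m|^{\,d-1-\epsilon}$. The mechanism is that for every rational prime $p\nmid a_d\,\mathrm{disc}(g)$ — a fixed finite set, once more by squarefreeness — the reductions $\alpha_i \bmod p$ are distinct, so $p$ divides at most one factor $m-\alpha_i$ and the whole of $v_p(g(m))$ is carried by that single factor, while the finitely many remaining primes contribute only $O(1)$. Running the $ABC$ relations over all $d$ factors simultaneously then controls the total multiplicity (the ``powerful part'') of $g(m)$ and forces the required radical bound. The main obstacle is exactly this assembly: a single $ABC$ application saves only one factor's worth of size — indeed the naive device of completing $d^{d}a_d^{d-1}g(x)=L(x)^{d}+r(x)$ with $L$ linear leaves a remainder $r$ of degree $d-2$ and yields only a far weaker exponent — so one must exploit all the factors at once and track carefully how the square divisor $q^{2}$ distributes among the conjugate factors $m-\alpha_i$. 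This book-keeping, together with the passage to $ABC$ over $L$, is the technical heart of the argument.
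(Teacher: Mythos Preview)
The paper does not supply a proof of this lemma: it is simply quoted as ``a part of Theorem~6 in \cite{Gr}'' (Granville). So there is no in-paper argument to compare against, only Granville's.

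Your reduction $q\le |g(m)|/\mathrm{rad}(g(m))$, and hence to the radical estimate $\mathrm{rad}(g(m))\gg_{\epsilon}|m|^{\,d-1-\epsilon}$, is correct and is exactly the target in \cite{Gr}. But Granville reaches that estimate by a different, entirely rational route: Belyi's theorem is used to produce polynomials $A,B,C\in\Z[t]$ with $A+B=C$, common degree $N$, exactly $N+2$ distinct roots among them in total, and $g\mid A$; the ordinary $ABC$-Conjecture over $\Z$, applied once to the integer relation $A(m)+B(m)=C(m)$, then yields the radical bound with the full exponent $d-1$ in a single stroke.

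Your sketch instead passes to the splitting field $L$ and invokes $ABC$ there. Two concerns. First, the paper assumes only the classical $ABC$-Conjecture over $\Z$; the number-field version is a stronger hypothesis, and although Elkies showed that $\Z$-$ABC$ implies $L$-$ABC$, that implication itself goes through Belyi and is of comparable depth to the lemma you are after --- so you are tacitly importing the hard step rather than performing it. Second, the assembly you flag as ``the technical heart'' is not actually carried out. A single three-term relation among the factors $m-\alpha_i$ saves one unit of exponent; it is not explained how repeated or simultaneous applications aggregate to the full exponent $d-1$ rather than something weaker, and naive products over pairs or triples overcount the shared radical. As written, the proposal locates the right target but does not reach it.
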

\vskip.2mm
An integer is called powerful if $p^2$ divides $n$ for every prime $p$ dividing $n$.
\vskip.2mm
\begin{remark}
Lemma \ref{abclemma} implies that if $g(x)\in\Z[x]$ has degree $>2$ and is separable, then $g(m)$ is powerful for only finitely many integers $m$.
\label{remark}
\end{remark}
The following results are without assuming $ABC$-Conjecture.
\begin{lemma}
Let $\displaystyle f(x), g(x)\in\Z[x]$ with $\displaystyle \deg f(x)>\deg g(x)$.
Then the equation $$f(x)=g(y)$$ has finitely many integer solutions if and only if $|x-y|$ is bounded for all 
integral
solutions $(x,y)$.
\label{mneqn}
\end{lemma}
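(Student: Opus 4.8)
The plan is to prove the two implications separately, and the work is entirely on the reverse one.

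The forward direction is immediate: if $f(x)=g(y)$ has only finitely many integer solutions $(x,y)$, then $\{\,|x-y| : (x,y)\text{ is a solution}\,\}$ is a finite set of nonnegative integers and is therefore bounded. Nothing else is needed here.

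For the reverse direction I would slice the solution set according to the value of $k:=x-y$. Assume $|x-y|\le C$ on all integer solutions, and set $n=\deg f(x)>\deg g(x)=m$, so in particular $n\ge 1$. For each integer $k$ with $|k|\le C$, introduce the one-variable polynomial $h_k(x):=f(x)-g(x-k)\in\Z[x]$. Since $\deg g(x-k)=m<n$, the degree-$n$ term of $h_k$ is exactly the leading term of $f$, so $h_k$ has degree precisely $n$; in particular $h_k$ is not the zero polynomial, hence has at most $n$ roots. Now every integer solution $(x,y)$ of $f(x)=g(y)$ has $y=x-k$ for some integer $k$ with $|k|\le C$, and then $x$ is a root of $h_k$. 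Therefore the number of solutions is at most $(2C+1)n$, which is finite.

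The single point requiring the hypothesis — and the only place where anything could go wrong — is the assertion that $h_k\not\equiv 0$: this is exactly where $\deg f>\deg g$ is used, since for polynomials of equal degree one could have $f(x)\equiv g(x-k)$ for some shift $k$, producing infinitely many solutions all with the same bounded value of $x-y$. Beyond making this observation explicit there is no genuine obstacle; the statement is soft, and the argument reduces to a degree count together with the fact that a nonzero polynomial has only finitely many roots.
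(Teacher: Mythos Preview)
Your proof is correct and follows essentially the same approach as the paper: both arguments slice the solution set by the finitely many possible values of the shift $x-y$ (or $y-x$) and then use that $f(x)-g(x\pm c)$ is a nonzero polynomial because $\deg f>\deg g$, hence has only finitely many roots. The paper phrases the reverse direction as a proof by contradiction and writes out $g(a+c)$ via its Taylor expansion to identify the coefficients, whereas you argue directly and simply observe that the leading term of $f$ survives; this is a cosmetic difference, not a substantive one.
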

\begin{proof}
Let $f(x)=a_0+\cdots+a_nx^n$ and $g(x)=b_0+\cdots+b_mx^m$ with $a_n\neq0\neq b_m$. Given $n>m$.
If possible assume that the equation $g(y)=f(x)$ has infinitely many integer solutions $x=a$, $y=b$, with $|a-b|$ bounded, i.e. $a-b$ takes values in a finite 
subset $S$ (say) of $\Z$. There should be an integer $c\in S$, such that the equation $g(b)=f(a)$ is satisfied by infinitely many 
$\displaystyle a, b\in\Z, b-a=c$. As $b=a+c$, we have $g(a+c)=f(a)$ for infinitely many integers $a$. 
i.e., $f(a)=g(c)+\frac{g^1(c)}{1!}a+\frac{g^2(c)}{2!}a^2+\cdots+\frac{g^m(c)}{m!}a^m$ for infinitely many integers $a$. 
This implies that the polynomial 
$$a_0-g(c)+\left(a_1-\frac{g^1(c)}{1!}\right)x+\cdots
+\left(a_m-\frac{g^m(c)}{m!}\right)x^m+a_{m+1}x^{m+1}+\cdots+a_nx^n$$ 
over $\Z$ has infinitely many integers roots $x=a$. Therefore
\vskip.2mm
\noin $a_0-g(c)=a_1-g^1(c)=\cdots=a_m-\frac{g^m(c)}{m!}=a_{m+1}=\cdots=a_n=0$, which is a contradiction to the hypothesis.
\end{proof}
For $c$ an integer Let us define a matrix $A(c)$ as follows:
\vskip.5mm
\[A(c)=\left(\begin{array}{ccccccc}
1&c&c^2&\cdots&\cdots&c^{m-1}&c^m\\
0&1&{2\choose1}c&\cdots&\cdots&{m-1\choose m-2}c^{m-2}&{m\choose m-1}c^{m-1}\\
0&0&1&\cdots&\cdots&{m-1\choose m-3}c^{m-3}&{m\choose m-2}c^{m-2}\\
\vdots&\vdots&\vdots&\cdots&\cdots&\vdots&\vdots\\
0&0&0&\cdots&\cdots&1&{m\choose1}c\\
0&0&0&\cdots &\cdots&0&1             
\end{array}\right).\]
\vskip.3mm
\begin{lemma}
 Let $\displaystyle a_0+\cdots+a_mx^m=f(x), b_0+\cdots+b_mx^m=g(x)\in\Z[x]$ with $\deg f(x)=\deg g(x)=m\geq1$. Then the equation $$g(y)=f(x)$$ has finitely many 
solutions $(x,y)$ in integers if and only if $|x-y|$ is bounded and $\displaystyle A(y-x)(b_0,b_1,\cdots,b_m)^t\neq(a_0,a_1,\cdots,a_m)^t$.
\end{lemma}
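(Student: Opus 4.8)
The plan is to recognize the matrix $A(c)$ as the matrix, in the monomial basis, of the ``Taylor shift'' operator $p(x)\mapsto p(x+c)$, and then to run essentially the same polynomial argument as in the proof of Lemma~\ref{mneqn}, the only new ingredient being a pigeonhole step. Concretely, first I would unwind the definition of $A(c)$: its $(i,k)$ entry is $\binom{k}{i}c^{k-i}$ when $k\geq i$ and $0$ otherwise, so the $i$-th coordinate of $A(c)(b_0,b_1,\cdots,b_m)^t$ equals $\sum_{k=i}^m\binom{k}{i}c^{k-i}b_k$. On the other hand, expanding $g(x+c)=\sum_{k=0}^m b_k(x+c)^k$ by the binomial theorem and collecting the coefficient of $x^i$ gives exactly $\sum_{k=i}^m\binom{k}{i}c^{k-i}b_k$ (equivalently, this is $g^{(i)}(c)/i!$). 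Hence, for every integer $c$,
\[
A(c)(b_0,b_1,\cdots,b_m)^t=(a_0,a_1,\cdots,a_m)^t \quad\Longleftrightarrow\quad g(x+c)=f(x)\text{ in }\Z[x].
\]

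For the forward direction, suppose $g(y)=f(x)$ has only finitely many integer solutions. Then $x-y$ takes finitely many values over the solution set, so $|x-y|$ is bounded. Moreover, if some solution $(x,y)$ had $A(y-x)(b_0,\cdots,b_m)^t=(a_0,\cdots,a_m)^t$, then with $c=y-x$ the identification above would give $g(x+c)=f(x)$ identically, and hence $(t,t+c)$ would be a solution for every $t\in\Z$ — contradicting finiteness. So $A(y-x)(b_0,\cdots,b_m)^t\neq(a_0,\cdots,a_m)^t$ for every integer solution $(x,y)$.

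For the converse, assume $|x-y|$ is bounded over all integer solutions, say $y-x$ always lies in a finite set $S\subset\Z$, and that $A(y-x)(b_0,\cdots,b_m)^t\neq(a_0,\cdots,a_m)^t$ for every solution. If the equation had infinitely many integer solutions, then by pigeonhole some fixed $c\in S$ would be realized by infinitely many solutions $(x,x+c)$, i.e. $g(x+c)-f(x)$ would have infinitely many integer roots; being a polynomial, it would be identically zero, so $g(x+c)=f(x)$, and by the identification $A(c)(b_0,\cdots,b_m)^t=(a_0,\cdots,a_m)^t$. But $c$ is realized by at least one solution, for which the standing hypothesis says the opposite — a contradiction. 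Hence the solution set is finite.

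The only real content is the first paragraph: checking that the combinatorial matrix $A(c)$ is precisely the matrix of $p(x)\mapsto p(x+c)$, where one must be careful about the indexing of rows/columns and about the direction of the shift (the entries of $A(y-x)$ correspond to $g(x+(y-x))=g(y)$). Everything afterward is the standard ``a polynomial with more roots than its degree vanishes'' argument — already used for Lemma~\ref{mneqn} — combined with a pigeonhole over the finitely many possible values of $x-y$. Note that, unlike the case $\deg f>\deg g$ of Lemma~\ref{mneqn}, here the hypothesis $\deg f=\deg g$ does not by itself preclude an identity $g(x+c)=f(x)$, which is exactly why the extra matrix condition is needed.
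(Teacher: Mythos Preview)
Your argument is correct and follows essentially the same route as the paper: pigeonhole on the finitely many possible values of $c=y-x$, then the observation that $g(x+c)-f(x)$ having infinitely many integer roots forces it to vanish identically, which in coordinates is exactly $A(c)(b_0,\ldots,b_m)^t=(a_0,\ldots,a_m)^t$. Your write-up is in fact more complete than the paper's: you make explicit the identification of $A(c)$ with the Taylor-shift operator $p(x)\mapsto p(x+c)$ (the paper leaves this buried in the displayed list of equalities $g^{(i)}(c)/i!=a_i$), and you supply the easy forward direction, which the paper omits.
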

\begin{proof}
If possible let the equation $g(y)=f(x)$ has infinitely many integer solutions $x=a$, $y=b$, $|a-b|$ bounded and 
for $c=b-a$,\\ $\displaystyle A(c)(b_0,b_1,\cdots,b_m)^t\neq(a_0,a_1,\cdots,a_m)^t$. This implies that there should be at least one integer $c$ such that 
$b-a=c$ and $g(b)=f(a)$ are satisfied by infinitely many integers $a,\,b$. 
Thus we have $f(a)=g(a+c)=g(c)+\frac{g^1(c)}{1!}a+\frac{g^2(c)}{2!}a^2+\cdots+\frac{g^m(c)}{m!}a^m$ for infinitely many integers $a$. This implies 
\begin{eqnarray*}
&&b_0+b_1c+b_2c^2+\cdots+b_mc^m=g(c)=a_0,\\
&&b_1+{2\choose1}b_2c+{3\choose2}b_3c^2+\cdots+{m\choose m-1}b_mc^{m-1}=\frac{g^1(c)}{1!}=a_1,\\
&&b_2+{3\choose1}b_3c+{4\choose2}b_4c^2+\cdots +{m\choose m-2}b_mc^{m-2}=\frac{g^2(c)}{2!}=a_2,\\
&&~~~~~~~~~~~~~~~~~~~~~~~~~~~~~~~~~~~~~~~~~\vdots\\
&&b_{m-2}+{m-1\choose1}b_{m-1}c+{m\choose2}b_mc^2=\frac{g^{m-2}(c)}{(m-2)!}=a_{m-2},\\
&&b_{m-1}+{m\choose1}b_mc=\frac{g^{m-1}(c)}{(m-1)!}=a_{m-1},\\
&&b_m=\frac{g^m(c)}{m!}=a_m.
\end{eqnarray*}
Which is same as $\displaystyle A(c)(b_0,b_1,\cdots,b_m)^t=(a_0,a_1,\cdots,a_m)^t$ a contradiction to our assumption. 
\end{proof}
\begin{corollary}
Let $f(x)$, $g(x)\in\Z[x]$ be as in the lemma, with $a_m\neq b_m$. Then the equation $$g(y)=f(x)$$ has finitely many integer solutions if and only if
$|x-y|$ is bounded for all integer solutions $(x,y)$. 
\end{corollary}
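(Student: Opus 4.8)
The plan is to deduce this directly from the preceding lemma, which characterises finiteness of the integer solution set of $g(y)=f(x)$ by the conjunction of two conditions: that $|x-y|$ be bounded over all integer solutions, and that $A(y-x)(b_0,b_1,\cdots,b_m)^t\neq(a_0,a_1,\cdots,a_m)^t$. Since the present corollary only strengthens the hypothesis by demanding $a_m\neq b_m$, it suffices to show that under this extra assumption the second condition holds automatically; then the equivalence supplied by the lemma collapses to the asserted one.

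First I would look at the bottom row of the matrix $A(c)$. By its very construction this row is $(0,0,\cdots,0,1)$ for every integer $c$, so the last coordinate of the vector $A(c)(b_0,b_1,\cdots,b_m)^t$ equals $b_m$, irrespective of $c$. Hence, were $A(c)(b_0,\cdots,b_m)^t=(a_0,\cdots,a_m)^t$ to hold for some integer $c$, comparing last coordinates would give $b_m=a_m$, contradicting the hypothesis $a_m\neq b_m$. Therefore $A(c)(b_0,\cdots,b_m)^t\neq(a_0,\cdots,a_m)^t$ for every integer $c$, and in particular for $c=y-x$ associated to any integer solution $(x,y)$.

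Substituting this observation into the lemma, the conjunction ``$|x-y|$ is bounded and $A(y-x)(b_0,\cdots,b_m)^t\neq(a_0,\cdots,a_m)^t$'' reduces to the single requirement that $|x-y|$ be bounded over all integer solutions, which is precisely the stated ``if and only if''. There is no genuine obstacle here; the only points meriting care are that the degree hypothesis $\deg f(x)=\deg g(x)=m\geq1$ of the lemma remains in force (it does, as $f,g$ are taken ``as in the lemma''), so that $a_m,b_m$ are indeed the leading coefficients and $A(c)$ is the correct $(m+1)\times(m+1)$ matrix, and that the inequality in the lemma is understood to range over the finitely many values of $y-x$ occurring among solutions — in either reading the argument above applies verbatim.
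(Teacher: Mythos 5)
Your argument is correct and is exactly the intended route: the paper's proof is simply ``Immediate from the Lemma,'' and your observation that the last row of $A(c)$ is $(0,\dots,0,1)$, so that $A(c)(b_0,\dots,b_m)^t=(a_0,\dots,a_m)^t$ would force $b_m=a_m$, is the detail that makes the Lemma's criterion collapse to boundedness of $|x-y|$ when $a_m\neq b_m$. Nothing further is needed.
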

\begin{proof}
Immediate from the Lemma.
\end{proof}
\begin{corollary}
Let $f(x)$, $g(x)\in\Z[x]$ be as in the lemma, $\deg f(x)=\deg g(x)=m\geq2$ and $a_{m-1}-b_{m-1}$ not divisible by $mb_m$. Then the equation $$g(y)=f(x)$$ has 
finitely many integer solutions if and only if $|x-y|$ is bounded for all integer solutions $(x,y)$.
\end{corollary}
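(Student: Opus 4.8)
The plan is to obtain the corollary as an immediate consequence of the preceding Lemma. That Lemma asserts that $g(y)=f(x)$ has finitely many integer solutions exactly when two things hold: $|x-y|$ is bounded over all integer solutions $(x,y)$, and there is no integer $c$ with $A(c)(b_0,b_1,\dots,b_m)^t=(a_0,a_1,\dots,a_m)^t$ (this is how the second clause of the Lemma must be read, as its proof makes clear). So the entire task is to show that the extra hypothesis $mb_m\nmid(a_{m-1}-b_{m-1})$ makes the second clause hold automatically; once that is done, ``finitely many integer solutions'' becomes equivalent to ``$|x-y|$ bounded for all integer solutions'', which is exactly the statement of the corollary — and the forward implication of that equivalence is in any case trivial.

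To rule out an integer $c$ with $A(c)(b_0,\dots,b_m)^t=(a_0,\dots,a_m)^t$, I would simply read off the scalar equation contributed by the penultimate row of $A(c)$. From the displayed form of $A(c)$ this row is $(0,\dots,0,1,{m\choose1}c)$, so the corresponding equation is
$$b_{m-1}+{m\choose1}b_mc=a_{m-1},\qquad\text{i.e.}\qquad mb_mc=a_{m-1}-b_{m-1}.$$
Since $c\in\Z$, this would force $mb_m\mid a_{m-1}-b_{m-1}$, contradicting the hypothesis. Hence no such $c$ exists and the Lemma applies, giving the claimed equivalence.

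I do not expect any real obstacle here: the whole argument is essentially one line after invoking the Lemma. The only points that need care are (a) correctly identifying the penultimate row of $A(c)$ — equivalently, singling out the equation $a_{m-1}=b_{m-1}+mb_mc$ among those derived in the proof of the Lemma — where the hypothesis $m\geq2$ guarantees this is a genuine, non-top row of the system involving exactly the two coefficients $b_{m-1}$ and $b_m$; and (b) being explicit that the Lemma's second condition is quantified over all integers $c$, so that exhibiting a single unsolvable scalar equation already suffices to conclude.
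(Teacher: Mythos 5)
Your proposal is correct and follows essentially the same route as the paper: both reduce the problem to the penultimate equation $b_{m-1}+\binom{m}{1}b_mc=a_{m-1}$ supplied by the lemma, so that any integer $c$ realizing infinitely many bounded-difference solutions would force $mb_m\mid a_{m-1}-b_{m-1}$, contradicting the hypothesis. Your explicit remarks clarifying how the lemma's second condition must be read are a reasonable (and helpful) tightening of the paper's terser one-line argument, but not a different method.
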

\begin{proof}
Let us assume that the equation $g(y)=f(x)$ is satisfied by infinitely many integers $x=a$, $y=b$ with $|a-b|$ bounded.
 So by the lemma we have $b_{m-1}+mb_mc=a_{m-1}$, hence $mb_m$ divides $b_{m-1}-a_{m-1}$, which is a contradiction. 
\end{proof}
\section{Cases $m<n$ and $m>n$}
\noin{\bf Proof of theorem \ref{mainthmngtm}}: As $\displaystyle\deg f(x)=n>m=\deg ((y+q_1)\cdots(y+q_m))$ by Lemma \ref{mneqn} the theorem immediately follows.$\hfill\square$
\vskip.5mm
\noin{\bf Proof of theorem \ref{mainthmnltm}}: As $\displaystyle\deg ((y+q_1)\cdots(y+q_m))=m>n=\deg f(x)$, immediate from Lemma \ref{mneqn}.$\hfill\square$
\section{Case $m=n$} In this section, for $f(x)\in \Z[x]$, monic with $\deg f(x)=m$, we study the conditions for which the equation $$(y+q_1)\cdots(y+q_m)=f(x)$$ 
has finitely many solutions.
\subsection{THEOREMS ON MONIC POLYNOMIALS} In this subsection, we give the definition for a complete composite number of a monic polynomial with positive 
integral coefficients. Using this definition we prove one main theorem on polynomials. This theorem is needed to prove the all theorems of the next subsection.
\begin{definition} 
Let $f(x)$ be a polynomial of degree $m$ with positive integral coefficients. Then a positive integer $k$ is said to be complete composite of $f(x)$, 
if $k = f(n)$, for some positive  integer $n$  and 
$$k  = \Pi_{i=1}^m( n  +  a_i ),$$
where  $\displaystyle a_1, \cdots, a_m$  are  positive  integers. If $f(a)$ is complete composite of $f(x)$, then $a$ is called a complete  number of $f(x)$.
\end{definition}
\begin{lemma}
Let $f(x)$ be a monic polynomial of degree $n$  with positive integral coefficients. If there are infinitely many complete composites of $f(x)$, then $f(x)$ has 
a factor $x + c$ with positive integer $c$.
\label{compositelemma}
\end{lemma}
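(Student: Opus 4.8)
Let $f(x)$ be a monic polynomial of degree $n$ with positive integral coefficients. If there are infinitely many complete composites of $f(x)$, then $f(x)$ has a factor $x+c$ with positive integer $c$.

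\noindent\textbf{Proof proposal.} The plan is to convert the hypothesis into infinitely many factorizations of the special shape $f(N)=\prod_{i=1}^{n}(N+a_i)$, to prove that the shifts $a_i$ cannot grow with $N$, and then to upgrade one recurring factorization to an identity of polynomials.

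First I would set up notation and extract the data. Write $f(x)=x^{n}+c_{n-1}x^{n-1}+\cdots+c_0$ with each $c_j$ a non-negative integer (here $n=\deg f$; the case $n\le 1$ is immediate, so assume $n\ge 2$). Since $f$ has positive leading coefficient it is eventually strictly increasing on $\Z^{+}$, so infinitely many complete composites of $f$ come from infinitely many complete numbers; that is, there is an infinite set $\mathcal{N}\subseteq\Z^{+}$ such that for each $N\in\mathcal{N}$ one may write $f(N)=\prod_{i=1}^{n}(N+a_i)$ with $a_1,\dots,a_n$ (depending on $N$) positive integers.

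The heart of the argument is a uniform bound on the $a_i$. Each factor satisfies $N+a_j\ge N+1>0$, hence for every index $i$
$$
N+a_i=\frac{f(N)}{\prod_{j\neq i}(N+a_j)}\le\frac{f(N)}{(N+1)^{n-1}}.
$$
Comparing leading terms, $(N+c_{n-1}+1)(N+1)^{n-1}=N^{n}+(c_{n-1}+n)N^{n-1}+\cdots$ strictly exceeds $f(N)=N^{n}+c_{n-1}N^{n-1}+\cdots$ for all sufficiently large $N$, so $f(N)/(N+1)^{n-1}\le N+c_{n-1}+1$ once $N$ is large, whence $a_i\le c_{n-1}+1=:B$ for every $i$ and every large $N\in\mathcal N$. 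This is exactly where monicity and the positivity of the coefficients are used: they trap all the shifts in the fixed finite range $\{1,2,\dots,B\}$, independently of $N$.

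Finally, discard from $\mathcal N$ the finitely many values of $N$ below the threshold above. There are only finitely many multisets $\{a_1,\dots,a_n\}$ with entries in $\{1,\dots,B\}$, so by the pigeonhole principle some fixed multiset $\{a_1^{\ast},\dots,a_n^{\ast}\}$ arises for infinitely many $N\in\mathcal N$. For those $N$ the two degree-$n$ polynomials $f(x)$ and $\prod_{i=1}^{n}(x+a_i^{\ast})$ take equal values, hence coincide; in particular $x+a_1^{\ast}$ is a factor of $f(x)$ with $a_1^{\ast}$ a positive integer, which is the assertion. I expect the uniform boundedness of the $a_i$ to be the only genuinely delicate step; once it is in place the conclusion is a routine pigeonhole-plus-interpolation argument, and the degenerate low-degree cases need no separate treatment beyond a one-line check.
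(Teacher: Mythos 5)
Your proof is correct, but it follows a different route from the paper's. The paper argues by a dichotomy on the shifts $b_{im}$ appearing in the factorizations $f(m)=\prod_{i=1}^{n}(m+b_{im})$: either some fixed value $c$ occurs as a shift for infinitely many pairs $(i,m)$, in which case $m+c\mid f(m)$ for infinitely many $m$ and Euclidean division of $f(x)$ by $x+c$ (whose remainder is a constant divisible by arbitrarily large integers, hence zero) yields the linear factor; or else all shifts grow without bound along the complete numbers, and then comparing the elementary symmetric functions $\sum_i b_{im}$, $\sum_{i<j}b_{im}b_{jm}$, etc., with the fixed coefficients $a_1,\dots,a_n$ of $f$ produces a contradiction. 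You instead prove outright that the shifts are uniformly bounded, via the clean estimate $N+a_i\le f(N)/(N+1)^{n-1}<N+c_{n-1}+1$ for large $N$, and then pigeonhole on the entire multiset of shifts to get one factorization pattern recurring infinitely often, whence $f(x)=\prod_i(x+a_i^{\ast})$ as polynomials since both sides agree at infinitely many points. Your version is arguably stronger and more economical: it delivers in one stroke the complete splitting of $f$ into monic linear factors with positive integer shifts, which is exactly the content of the paper's next lemma (proved there by induction on the degree using the present lemma), so your argument would make that induction unnecessary; the paper's approach, by contrast, avoids any explicit bound on the shifts at the cost of the case split and the separate inductive step. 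The only points to state carefully in a final write-up are the threshold beyond which $(N+c_{n-1}+1)(N+1)^{n-1}>f(N)$ holds (it does, since the difference is $nN^{n-1}$ plus lower-order terms with fixed coefficients) and the standard fact that two polynomials agreeing at infinitely many integers are identical; both are routine.
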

\begin{proof}
Let $f(x)$ be a monic polynomial of degree $n$ with positive integral coefficients. Let $$f(x)  =  x^n  + a_1 x^{n-1} +\cdots + a_n,$$ where 
$\displaystyle a_1, a_2,\cdots, a_n$ are positive integers. Let there be infinitely many complete composites of $f(x)$. 
Therefore there are infinitely many positive integers $m$ such that 
\begin{equation}
f(m) =  \Pi_{ i = 1}^n(m + b_{im}),
\label{compositeq}
\end{equation}
where $\displaystyle b_{1m}, b_{2m}, \cdots, b_{nm}$ are positive integers. If $b_{im} = c$, for infinitely many pairs $(i, m)$, where $c$ is a positive integer, 
then $m + c$ divides $f(m)$, for infinitely many positive integers $m$. For $f(x)$ and $x + c$, there exist two polynomials $g(x)$ and $h(x)$ with integral 
coefficients such that
$$f(x)  = (x + c)g(x) + h(x),$$
where $h(x)$ is a constant polynomial (see \cite{He}). Since  $\displaystyle f(n), n + c$ and $g(n)$ are integers for any integer $n$, we have
$$f(n)   = ( n + c)g(n) + h(n),$$
where $h(n)$ is an integer. Since $n + c$ divides $f(n)$, for infinitely many positive integers $n$, $n + c$ divides $h(n)$, for infinitely many positive 
integers $n$. Therefore $h(n) = 0$, because $h(x)$ is a constant polynomial. So  
$$f(x) = (x + c) g(x).$$
This gives that $f(x)$ has a factor $x + c$ with positive integer $c$.
\vskip.2mm 
Suppose that $b_{im}$ is not a constant for infinitely many pairs $(i, m)$. From this, it is clear that for each positive integer $k$, there exist a positive 
integer $N$ such that
$$b_{im}  > k,$$
for all $\displaystyle i = 1, 2, \cdots, n$ and for all $m\geq N$, where $m$ is a complete number of $f(x)$. From equation (\ref{compositeq}), we get
\begin{equation}
\sum_{i = 1}^na_im^{n-i}   =  \sum_{i = 1}^nb_{im} m^{n-1}+\sum_{i<j}b_{im}b_{jm}m^{n-2}+\cdots+  \Pi_{i=1}^n b_{im},
\label{compositeeq}
\end{equation}
Since for each positive integer $k$, there exist a positive integer $N$ such that
$$b_{im}  > k,$$
for all $\displaystyle i = 1, 2,\cdots, n$ and for all $m\geq N$, where $m$ is a complete number of $f(m)$, then we can choose a complete number $d$ of $f(x)$ 
such that
$$\sum_{i = 1}^nb_{id}> a_1,\,\,\,\,\sum_{i < j}b_{id}b_{jd}>a_2,\, \cdots\,\cdots,\,\,\,\,\Pi_{i=1}^nb_{id}>a_n.$$
Therefore
$$\sum_{i = 1}^na_id^{n-i}< \sum_{i =1}^nb_{id} d^{n-1} + \sum_{i<j}b_{id}b_{jd}d^{n-2}+ \cdots + \Pi_{i = 1}^n b_{id},$$
which is a contradiction to (\ref{compositeeq}). So our assumption is wrong. This proves  the theorem.
\end{proof}
\begin{lemma}
Let $f(x)$ be a monic polynomial of degree $n$  with positive integral coefficients. Then there are infinitely many complete composites of $f(x)$ if and only if 
$f(x)$ is the product of $n$ one degree monic polynomials with positive integral coefficients.
\label{compositethm}
\end{lemma}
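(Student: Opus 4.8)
The plan is to prove the equivalence directly, putting all the content into the ``only if'' direction; the ``if'' direction is immediate, and the key to the other one is a uniform bound on the shifts that occur in a complete-composite representation. For the easy direction, if $f(x)=(x+c_1)(x+c_2)\cdots(x+c_n)$ with $c_1,\dots,c_n\in\Z^+$, then $f$ automatically has positive integral coefficients, every positive integer $m$ is a complete number of $f(x)$ (take $a_i=c_i$), and since $f$ is strictly increasing on $[1,\infty)$ the values $f(m)$ are pairwise distinct, so there are infinitely many complete composites.

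\textbf{The main direction.} Write $f(x)=x^n+a_1x^{n-1}+\cdots+a_n$ with every $a_i\in\Z^+$, and suppose there are infinitely many positive integers $m$ with
\[
f(m)=\prod_{i=1}^n(m+b_{im}),\qquad b_{1m},\dots,b_{nm}\in\Z^+ .
\]
First I would bound the $b_{im}$ independently of $m$. Fix such an $m\geq1$ and let $b_{jm}=\max_i b_{im}$. Since $m+b_{im}\geq m$ for every $i$,
\[
f(m)=(m+b_{jm})\prod_{i\neq j}(m+b_{im})\;\geq\;(m+b_{jm})\,m^{n-1}=m^n+b_{jm}m^{n-1},
\]
hence $b_{jm}\leq (f(m)-m^n)/m^{n-1}=a_1+a_2/m+\cdots+a_n/m^{n-1}\leq a_1+a_2+\cdots+a_n=:B$, so in fact every $b_{im}\leq B$. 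Thus each tuple $(b_{1m},\dots,b_{nm})$ lies in the finite set $\{1,\dots,B\}^n$, and by the pigeonhole principle there is a single tuple $(c_1,\dots,c_n)\in(\Z^+)^n$ with $f(m)=\prod_{i=1}^n(m+c_i)$ for infinitely many $m$. Two polynomials agreeing at infinitely many integers coincide, so $f(x)=\prod_{i=1}^n(x+c_i)$, which is the required factorisation into $n$ monic linear polynomials with positive integral coefficients.

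\textbf{On alternatives and the main obstacle.} One could instead induct on $n$, using Lemma \ref{compositelemma} to peel off a factor $x+c$ with $c\in\Z^+$ and writing $f(x)=(x+c)g(x)$ with $g\in\Z[x]$ monic of degree $n-1$. The difficulty in that route is that $g$ need not have positive coefficients a priori (for instance $x^3+1=(x+1)(x^2-x+1)$), so before invoking the inductive hypothesis one must check that dividing the product representation of $f(m)$ by the factor equal to $m+c$ yields a genuine complete-composite representation of $g$; this again reduces to the same boundedness estimate, now applied to $g(m)=f(m)/(m+c)$. I therefore expect the only real step to be the uniform bound $b_{im}\leq B$ above, after which pigeonhole and the polynomial identity theorem finish the proof.
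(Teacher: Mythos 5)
Your proof is correct, but it takes a genuinely different route from the paper. The paper proceeds by induction on the degree: it first proves a separate lemma (Lemma \ref{compositelemma}) showing that infinitely many complete composites force a linear factor $x+c$ with $c\in\Z^+$ (via a dichotomy on whether some shift value $b_{im}$ repeats infinitely often, plus polynomial division), then writes $f(x)=(x+c)g(x)$ and asserts that $g$ again has positive coefficients and infinitely many complete composites before invoking the inductive hypothesis. Your argument is direct and self-contained: the uniform bound $b_{im}\le a_1+\cdots+a_n$, pigeonhole on the finitely many possible shift tuples, and the fact that two polynomials agreeing at infinitely many integers coincide immediately yield the full factorisation $f(x)=\prod_{i=1}^n(x+c_i)$ in one step. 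What your route buys is economy and rigour: it bypasses Lemma \ref{compositelemma} entirely, gives an explicit bound on the admissible shifts, and avoids the two asserted-but-unproved steps in the paper's induction --- that the quotient $g$ inherits positive coefficients (not automatic in general, as your $x^3+1$-type remark indicates; e.g.\ $x^3+x^2+x+6=(x+2)(x^2-x+3)$ shows a monic polynomial with positive coefficients can have a quotient by a linear factor with a negative coefficient) and that $g$ inherits infinitely many complete composites, which requires matching one factor of each representation of $f(m)$ to $m+c$. What the paper's route buys is mainly the reuse of its linear-factor lemma; your pigeonhole argument in effect subsumes both that lemma and the induction.
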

\begin{proof}
Let $f(x)$ be a monic polynomial of degree $n$ with positive integral coefficients. Let there be infinitely many complete composites of $f(x)$.
We prove this result using Mathematical induction on $n$. Clearly the result is true for $n = 1$. Let us assume that the result is true for all
positive integers $< n$. We prove this result for $n$. Since there are infinitely many complete composites of $f(x)$, by Lemma \ref{compositelemma}, 
$f(x)$ has a factor $x + c$ with positive integer $c$. Therefore 
$$f(x) = (x + c)g(x),$$
where $g(x)$ is a polynomial of degree $n-1$ with positive integral coefficients. Since there are infinitely many complete composites of $f(x)$, 
there are infinitely many complete composites of $g(x)$. By our induction hypothesis, $g(x)$ is the product of $n-1$ monic polynomials of degree one 
with positive integral coefficients. Therefore $f(x)$ is the product of $n$ monic polynomials of degree one with positive integral coefficients. 
Converse part is obvious.
\end{proof}
\begin{lemma}
Let $f(x)$ be a monic polynomial of degree $n$  with integral coefficients. Then there are infinitely many complete composites of $g(x) =  f(x+ h)$, 
(where $h$ is a positive integer such that the coefficients of $g(x)$ are positive) if and only if $f(x)$ is the product of $n$ monic polynomials of one 
degree with integral coefficients.
\label{completecompthm}
\end{lemma}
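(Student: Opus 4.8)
The plan is to reduce everything to Lemma \ref{compositethm} by the translation $x\mapsto x-h$. Put $g(x)=f(x+h)$; by the hypothesis on $h$, $g(x)$ is a monic polynomial of degree $n$ with positive integral coefficients, so Lemma \ref{compositethm} applies verbatim to $g$: it has infinitely many complete composites if and only if $g(x)=\Pi_{i=1}^n(x+c_i)$ for some positive integers $c_1,\dots,c_n$. Everything then follows by translating between $f$ and $g$.

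For the forward implication, suppose $g$ has infinitely many complete composites. By Lemma \ref{compositethm}, $g(x)=\Pi_{i=1}^n(x+c_i)$ with each $c_i\in\Z^+$, and substituting back, $f(x)=g(x-h)=\Pi_{i=1}^n\bigl(x+(c_i-h)\bigr)$. Each factor $x+(c_i-h)$ is monic of degree one with integer coefficients, so $f$ is a product of $n$ one-degree monic polynomials over $\Z$, as required.

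For the converse, suppose $f(x)=\Pi_{i=1}^n(x+d_i)$ with $d_i\in\Z$. Then $g(x)=f(x+h)=\Pi_{i=1}^n\bigl(x+(h+d_i)\bigr)$. Here I would invoke the elementary fact that a monic polynomial with positive coefficients is strictly positive on $[0,\infty)$ and hence has no real root $\ge 0$; since $g$ has positive coefficients, every root $-(h+d_i)$ of $g$ is negative, i.e.\ each $h+d_i$ is a positive integer. Thus $g$ is a product of $n$ one-degree monic polynomials with positive integral coefficients, and Lemma \ref{compositethm} (in particular its easy converse direction) gives that $g$ has infinitely many complete composites.

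There is no serious obstacle here: the mathematical weight sits entirely in Lemma \ref{compositethm}, and the only new ingredient is the sign bookkeeping, namely that the linear factors of $g$ must have positive constant terms (so the ``positive integral coefficients'' hypothesis of Lemma \ref{compositethm} is met) while those of $f$ need only lie in $\Z$. If desired, one could also observe in passing that such an $h$ always exists for monic $f$, since the coefficient of $x^j$ in $f(x+h)$ is a polynomial in $h$ with positive leading term $\binom{n}{j}$, hence positive for all large $h$.
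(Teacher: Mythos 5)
Your proof is correct and follows essentially the same route as the paper: apply Lemma \ref{compositethm} to $g(x)=f(x+h)$ and translate the linear factors back and forth by $h$. The only difference is that you spell out the converse (checking that the constant terms $h+d_i$ are positive because $g$ has positive coefficients), which the paper dismisses as obvious.
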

\begin{proof}
Let $f(x)$ be a monic polynomial of degree $n$ with integral coefficients. Let there be infinitely many complete composites of $g(x) = f(x+ h)$, 
where $h$ is a positive integer such that $g(x)$ has positive coefficients. By Lemma \ref{compositethm}, $g(x)$ is the product of $n$ monic polynomials 
of degree one with positive integral coefficients. Let 
$$g(x) = \Pi_{i = 1}^n (x +  m_i).$$
Therefore 
$$f(x) = g(x-h ) = \Pi_{i = 1}^n (x +  m_i-h).$$
Therefore $f(x)$ is the product of $n$ one degree monic polynomials with  integer coefficients.  Converse part is obvious.
\end{proof}
\subsection{Finiteness of integral solutions} In this subsection, we study the finiteness of integral solutions of equation (\ref{e1}) using the results 
from above subsection.
\begin{lemma}
Let $f(x)$ be a monic polynomial of degree $m$ with positive integral coefficients, all roots of $f(x)$ be not integers and let 
$\displaystyle q_1, q_2,\cdots, q_m$ be positive integers. Then the equation
\begin{equation}
( y + q_1)( y + q_2 )\cdots( y + q_m ) =  f(x)
\label{meqnposeq}
\end{equation}
has finitely many solutions $x$ and $y$ in positive integers.
\label{meqnposthm}
\end{lemma}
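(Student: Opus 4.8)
The plan is to argue by contradiction: suppose equation (\ref{meqnposeq}) has infinitely many solutions $(x,y)$ in positive integers. The first step is to understand the relationship between $x$ and $y$ forced by the equation. Since $f(x)$ is monic of degree $m$ with positive integral coefficients, for large $x$ we have $f(x)$ squeezed between $(x+c_1)^m$ and $(x+c_2)^m$ for suitable constants, so $x/y \to 1$ along any infinite sequence of solutions; more precisely $y = x + O(1)$. Hence there is a fixed positive integer $h$ (one of finitely many possible offsets $y - x$) such that $y = x + h$ — or rather, passing to an infinite subsequence, we may assume the difference $y - x$ is constant. Replacing $y$ by $x + h$ for that fixed $h$, the equation becomes $(x + h + q_1)(x + h + q_2)\cdots(x + h + q_m) = f(x)$ holding for infinitely many integers $x$, hence as a polynomial identity in $\Z[x]$.

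The second step exploits this polynomial identity via the complete-composite machinery. Write $g(x) = f(x + h)$ so that the identity reads $g(x - h) \cdot$ (appropriate shift) — more cleanly: the identity $\Pi_{i=1}^m (x + h + q_i) = f(x)$ says precisely that $f(x)$ is a product of $m$ monic linear polynomials with integer coefficients, namely $x + (h + q_i)$. By Lemma \ref{completecompthm} (or directly, since we have an explicit factorization), $f(x) = \Pi_{i=1}^m(x + (h+q_i))$, so every root of $f(x)$ equals $-(h+q_i)$, an integer. This directly contradicts the hypothesis that not all roots of $f(x)$ are integers. Therefore (\ref{meqnposeq}) cannot have infinitely many positive integer solutions.

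The step I expect to be the main obstacle is making the passage ``$y = x + O(1)$'' fully rigorous in the positive-integer setting — that is, showing that along any infinite set of solutions, the difference $y - x$ takes only finitely many values, so that one value recurs infinitely often and yields a genuine polynomial identity. The estimate is elementary: from $f(x) = x^m + a_1 x^{m-1} + \cdots$ with $a_i > 0$, one gets $(x+1)^m \le f(x)$ is false in general, so one must instead bound $f(x)$ above and below by $(x + C)^m$ for an explicit $C$ depending on the $a_i$ and then observe that $\Pi(y + q_i)$ lies between $(y + q_{\min})^m$ and $(y + q_{\max})^m$; comparing gives $|y - x|$ bounded by a constant depending only on $f$ and the $q_i$. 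Once boundedness of $y - x$ is in hand, the pigeonhole reduction to a polynomial identity and the invocation of the complete-composite results are routine. One should also note the role of positivity: all of Lemmas \ref{compositelemma}--\ref{completecompthm} are stated for polynomials with positive coefficients and $q_i$ positive, which is exactly the present hypothesis, so no adjustment is needed; the general (not-necessarily-positive) case will presumably be reduced to this one in Theorem \ref{mainthmneqm} by a shift.
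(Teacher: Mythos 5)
Your proof is correct, but it follows a genuinely different route from the paper's. You prove the two-sided bound $|y-x|\le C$ directly (via $x^m < f(x)\le (x+C)^m$ for $x\ge 1$, e.g.\ with $C=a_1+\cdots+a_m$, compared against $(y+q_{\min})^m\le \prod_i(y+q_i)\le (y+q_{\max})^m$), then pigeonhole infinitely many solutions onto a fixed offset $h=y-x\in[-q_{\max},C]$ and read off the polynomial identity $f(x)=\prod_{i=1}^m(x+h+q_i)$, whose roots $-(h+q_i)$ are all integers --- contradicting the hypothesis. The paper instead proves only the one-sided bound $x\le y+q_{\max}$ (otherwise $\prod_i(y+q_i)<x^m<f(x)$) and then splits into two cases: if some fixed difference $x-y=c\in[0,q_{\max}]$ recurs infinitely often, it argues via divisibility of $f(a)$ by $a-c+q_j$ that $f$ has integer roots; if instead $y>x$ infinitely often, it observes that $f(a)$ is then a ``complete composite'' and invokes Lemma \ref{compositethm} to force a factorization of $f$ into monic linear polynomials with positive integer constant terms. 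Your version buys a shorter, self-contained argument that handles both of the paper's cases uniformly (the offset $h$ may have either sign) and bypasses the complete-composite machinery of Lemmas \ref{compositelemma}--\ref{completecompthm} altogether; the paper's route concentrates the growth comparison once and for all inside Lemma \ref{compositelemma}, at the cost of the extra case analysis here. The estimate you flagged as the main obstacle is exactly the right one and does close the gap, so no essential step is missing.
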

\begin{proof}
Let $f(x)$ be a monic polynomial of degree $m$ with positive integral coefficients, all roots of $f(x)$ be not integers and let 
$\displaystyle q_1, q_2, \cdots, q_m$ be positive integers. Let
$$q_i  = \max\{q_1, q_2, \cdots, q_m\}.$$
Suppose that there is a solution $x = a$ and $y = b$ in positive integers for (\ref{meqnposeq}) such that
$$b + q_i < a.$$
Therefore
$$(b + q_1)(b + q_2)\cdots(b + q_m)<a^m.$$
Since $a^m < f(a)$, $$(b + q_1)(b + q_2 )\cdots(b + q_m ) < f(a).$$
Which is a contradiction, because (\ref{meqnposeq}) has the solution $x = a$ and $y = b$. Therefore
$$b + q_i \geq a.$$
This implies that $a\in\{\displaystyle b,  b + 1,  b + 2, \cdots, b + q_i - 1,  b + q_i\}$ or $b > a$. Suppose that for (\ref{meqnposeq}), 
there are infinitely many solutions $x = a$ and $y = b$ in positive integers such that $a = b + c$, where $c$ is a positive integer such that 
$0 \leq c \leq q_i$. Then $a - c + q_j$  divides $f(a)$, for $\displaystyle j = 1, 2, \cdots, m$ and infinitely many positive integers $a$. 
This implies that all roots of $f(x)$ are integers. Which is a contradiction to our assumption. Therefore there are infinitely many solutions 
$x= a$ and $y = b$ in positive integers for (\ref{meqnposeq}) such that
$$b > a.$$
This implies that there are infinitely many complete composites of $f(x)$. By Lemma (\ref{compositethm}), $f(x)$ has only integral roots. 
Which is a contradiction to our assumption. This proves the lemma.
\end{proof}
\begin{lemma}
Let $f(x)$ be a monic polynomial of degree $m$ with integral coefficients, all the roots of $f(x)$ be not integers and let $\displaystyle q_1, q_2, \cdots, q_m$ 
be  integers. Then the equation (\ref{e1}),
$$( y + q_1)( y + q_2 )\cdots( y + q_m ) =  f(x)$$
has finitely many solutions $x$ and $y$ in positive integers.
\label{meqnpossolthm}
\end{lemma}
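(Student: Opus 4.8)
The plan is to reduce to Lemma \ref{meqnposthm} by translating both variables so as to restore the positivity hypotheses required there; write $m=\deg f\ge 1$. First I would peel off the "boundary" solutions. For each fixed positive integer $y$ the quantity $(y+q_1)\cdots(y+q_m)$ is a fixed integer, so $f(x)$ minus that integer is a nonzero polynomial of degree $m$ in $x$, giving at most $m$ admissible values of $x$; symmetrically, for each fixed positive integer $x$ the polynomial $(y+q_1)\cdots(y+q_m)-f(x)$ is monic of degree $m$ in $y$, hence has at most $m$ integer roots. Consequently it suffices to prove that, for suitable fixed positive integers $h$ and $s$ to be chosen, the equation has only finitely many solutions with $x\ge h+1$ and $y\ge s+1$, since the complementary ranges $x\le h$ and $y\le s$ contribute only finitely many pairs.

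Next I would choose the two shifts. Since $f$ is monic, there is a positive integer $h$ for which $g(x):=f(x+h)$ has all coefficients positive integers (the elementary fact already invoked, parenthetically, in Lemma \ref{completecompthm}); moreover $g$ is again monic of degree $m$, and since its roots are exactly those of $f$ translated by the integer $-h$, the polynomial $g$ still has at least one non-integral root. Put $q_{\min}=\min_{1\le i\le m}q_i$, set $s=\max(0,\,1-q_{\min})$, and let $q_i':=q_i+s$; then each $q_i'=q_i+s\ge q_{\min}+s\ge 1$ is a positive integer. Substituting $x=u+h$ and $y=v+s$, so that $y+q_i=v+q_i'$ and $f(x)=g(u)$, establishes a bijection between the solutions $(x,y)$ of the original equation with $x\ge h+1$, $y\ge s+1$ and the solutions $(u,v)$ in positive integers of
$$(v+q_1')(v+q_2')\cdots(v+q_m')=g(u).$$

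Finally, $g$ is monic of degree $m$ with positive integral coefficients and has at least one non-integral root, while $q_1',\dots,q_m'$ are positive integers, so Lemma \ref{meqnposthm} applies directly and shows that the displayed equation has only finitely many solutions $(u,v)$ in positive integers. Undoing the substitution yields finitely many solutions $(x,y)$ with $x\ge h+1$, $y\ge s+1$, and together with the boundary solutions this exhausts all solutions in positive integers. The one point that needs care is the bookkeeping around the two translations — checking that the non-integral-root hypothesis survives the shift $x\mapsto x+h$, that the two "edge" ranges genuinely contribute only finitely many pairs, and that $h$ can be chosen to make all coefficients of $f(x+h)$ positive — but this is routine, and beyond Lemma \ref{meqnposthm} no new difficulty arises.
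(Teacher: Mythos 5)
Your proof is correct and takes essentially the same route as the paper: translate $x$ by a positive integer $h$ so that $f(x+h)$ has positive coefficients, translate $y$ (equivalently, shift the $q_i$) so that the shifts become positive integers, and then invoke Lemma \ref{meqnposthm}. You are in fact somewhat more careful than the paper, which performs the same two shifts (using $p_i=k+1+q_i$ with $k=\max|q_i|$) but omits the explicit correspondence of solutions and the finitely many boundary solutions with $x\le h$ or small $y$ that your first paragraph disposes of.
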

\begin{proof}
Let $f(x)$  be a monic  polynomial of degree $m$ with integral coefficients, all roots of $f(x)$ be not integers and let $\displaystyle q_1, q_2, \cdots, q_m$ 
be 
integers.  Let
$$g(x) = f(x + h),$$
where $h$ is a positive integer such that the coefficients of $g(x)$ are positive integers. Let
$$k = \max\{ | q_1|, | q_2 |, \cdots, | q_m | \}.$$
Clearly $k + 1 + q_i$ is a positive integer for all $\displaystyle i = 1, 2, \cdots, m$. 
Let $p_i = k + 1 + qi$. Since all roots of $f(x)$ are not integers, by Lemma \ref{meqnposthm}, the equation 
$$( y + p_1)( y + p_2 )\cdots( y + p_m ) = g(x)$$
has finitely many solutions in positive integers. This proves the theorem.
\end{proof}
\begin{lemma}
Let $f(x)$ be a monic polynomial of degree $m$ with integral coefficients, all the roots of $f(x)$ be not integers and let 
$\displaystyle q_1, q_2, \cdots, q_m$ be  integers. Then the equation (\ref{e1}),
$$( y + q_1)( y + q_2 )\cdots( y + q_m ) =  f(x)$$
has finitely many solutions $x$ and $y$ in negative integers.
\label{meqnnegsolthm}
\end{lemma}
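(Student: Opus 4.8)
The plan is to reduce the negative-integer case to the positive-integer case already established in Lemma \ref{meqnpossolthm}, via the substitution $x\mapsto -x$, $y\mapsto -y$. Suppose $(x,y)$ is a solution of the equation in negative integers, and write $x=-u$, $y=-v$ with $u,v$ positive integers. Each factor on the left transforms as $y+q_i=-(v-q_i)=-(v+q_i')$, where $q_i':=-q_i\in\Z$, so the left-hand side becomes $(-1)^m\prod_{i=1}^m(v+q_i')$. On the right, $f(-u)$ has leading term $(-1)^m u^m$, so setting $\tilde f(u):=(-1)^m f(-u)$ yields a monic polynomial of degree $m$ with integral coefficients. The equation thus becomes $\prod_{i=1}^m(v+q_i')=\tilde f(u)$.

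Next I would verify the hypotheses needed to invoke Lemma \ref{meqnpossolthm} for $\tilde f$ and the integers $q_i'$. The roots of $\tilde f$ are precisely the negatives of the roots of $f$; hence if $f$ has a non-integral root, so does $\tilde f$, so "all roots of $\tilde f$ are not integers" holds. The $q_i'$ are integers. Therefore by Lemma \ref{meqnpossolthm}, the equation $\prod_{i=1}^m(v+q_i')=\tilde f(u)$ has only finitely many solutions $(u,v)$ in positive integers. Since $(x,y)\mapsto(u,v)=(-x,-y)$ is a bijection between the negative-integer solutions of the original equation and the positive-integer solutions of the transformed one, the original equation has finitely many solutions in negative integers, as claimed.

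I do not expect a genuine obstacle here; the only points requiring attention are bookkeeping: checking that the sign $(-1)^m$ cancels correctly from both sides, confirming that $\tilde f$ remains monic with integer coefficients, and confirming that non-integrality of a root is preserved under $u\mapsto -u$. All of these are immediate once the substitution is written out, so the proof is short once Lemma \ref{meqnpossolthm} is in hand.
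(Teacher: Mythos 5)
Your proposal is correct and takes essentially the same route as the paper: substitute $x=-u$, $y=-v$ and reduce to Lemma \ref{meqnpossolthm} applied to the transformed monic polynomial and shifted integers. The only (cosmetic) difference is that the paper splits into the cases $m$ even and $m$ odd, using $f(-x)$ or $-f(-x)$ respectively, whereas you absorb the sign uniformly by setting $\tilde f(u)=(-1)^m f(-u)$.
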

\begin{proof}
Let $f(x)$ be a monic polynomial of degree $m$ with integral coefficients, all the roots of $f(x)$ be not integers and let 
$\displaystyle q_1, q_2, \cdots, q_m$ be integers. Suppose that there are infinitely many solutions $x = -a$ and $y = -b$ for (\ref{e1}), 
where $a$ and $b$ are positive integers. Therefore there are infinitely many solutions $x = a$ and $y = b$ in positive integers for the equation
\begin{equation}
(- 1)^m ( b - q_1)( b - q_2 )\cdots( b - q_m ) =  f(- a).
\label{meqnnegsoleq}
\end{equation}
If $m$ is even, then (\ref{meqnnegsoleq}) implies 
\begin{equation}
( b- q_1)( b- q_2 )\cdots( b - q_m ) =  f(- a),
\label{meqnnegsoleq1}
\end{equation}
where $f(- x)$ is a monic polynomial with integral coefficients. Therefore (\ref{meqnnegsoleq1}) has finitely many solutions $a$ and $b$ 
in positive integers, by Lemma \ref{meqnpossolthm}.This gives a contradiction to that (\ref{meqnnegsoleq}) has infinitely many solutions $a$ and $b$. 
\vskip.3mm
If $m$ is odd, then (\ref{meqnnegsoleq}) implies
\begin{equation}
( b-q_1)( b-q_2 )\cdots( b-q_m ) =  - f(- a),
\label{meqnnegsoleq2}
\end{equation}
where $-f(- x)$ is a monic polynomial with integral coefficients. Therefore the equation (\ref{meqnnegsoleq2}) has finitely many solutions in positive integers, 
by Lemma \ref{meqnpossolthm}. This gives a contradiction to that (\ref{meqnnegsoleq}) has infinitely many solutions $a$ and $b$. Therefore  (\ref{e1}) 
has finitely many solutions $x$ and $y$ in negative integers.
\end{proof}
\vskip.5mm
\noin {\bf Proof of Theorem \ref{mainthmneqm}}:
Let $f(x)$  be a monic  polynomial of degree $m$ with integral coefficients, all roots of $f(x)$ be not integers and let $\displaystyle q_1, q_2, \cdots, q_m$ 
be integers. Suppose that there are infinitely many solutions $x = - a$ and $y = b$ for the equation (\ref{e1}) such that $a$ and $b$ are positive integers. 
Therefore there are infinitely many solutions $x = a$ and $y = b$ in positive integers for the equation
\begin{equation}
( b + q_1)( b + q_2 )\cdots( b + q_m ) =  f(- a),
\label{meqnnegposeq}
\end{equation}
from (\ref{e1}). If $m$  is even, by Lemma \ref{meqnpossolthm}, the equation (\ref{meqnnegposeq}) has finitely many solutions in positive integers. 
Which is a contradiction to our assumption. If $m$ is odd, then $f(- k)$ is negative for all positive integers $k > l$, where $l$ is some positive integer. 
Which is a contradiction to $b + q_i$ is positive for all $\displaystyle i = 1, 2, \cdots, m$ and for infinitely many positive integers $b$. Suppose that 
there are infinitely many solutions  $x = a$ and $y = - b$  for the equation  (\ref{e1}) such that $a$  and $b$ are positive integers. Therefore
\begin{equation}
(- 1)^m( b - q_1)( b - q_2 )\cdots( b - q_m ) =  f( a).
\label{meqnposnegeq}
\end{equation}
If $m$  is even, by Lemma \ref{meqnpossolthm}, there are finitely many solutions in positive integers. Which is a contradiction to our assumption. 
If $m$ is odd, then we get a contradiction, because left hand side of (\ref{meqnposnegeq}) is negative for infinitely many positive integers $b$,
but right hand side of (\ref{meqnposnegeq}) is positive for all positive integers $a$. From the above two cases, we get that the equation (\ref{e1}) 
has infinitely many solutions $x = a$ and $y =  b$ in positive integers or  the equation (\ref{e1}) has infinitely many solutions $x = - a$ and $y = - b$,
where $a$ and $b$ are positive integers. This contradicts the two Lemmas (\ref{meqnpossolthm}) and (\ref{meqnnegsolthm}). This proves the theorem.
\hfill$\square$
\section{Diophantine Applications of $ABC$ Conjecture}
\noin{\bf Proof of Theorem \ref{abcthm1}}: 
Suppose that there are infinitely many solutions $(x,y)\in\Z\times\Z$ for the equation $(\ref{e1})$. First we assume that the equation $(\ref{e1})$ has infinitely 
many solutions $(a,b)$ in positive integers. Then $$f(a)=g(b)$$ for infinitely many $(a,b)\in\N\times\N$. Therefore 
\begin{equation}
f_{11}(a)^{\alpha_1}f_{12}(a)^{\alpha_2}\cdots f_{1l}(a)^{\alpha_l}\pm f_{21}(a)^{\beta_1}f_{22}(a)^{\beta_2}\cdots f_{2m}(a)^{\beta_m}=g(b)
\label{subburameq}
\end{equation}
for infinitely many $(a,b)\in\N\times\N$.
Now we shall prove that for infinitely many $(a,b)\in\N\times\N$, 
\begin{equation}
\gcd ({f_{11}(a) \cdots f_{1l}(a), f_{21}(a) \cdots f_{2m}(a)})=1.
\label{subbugcd1}
\end{equation}
Suppose for infinitely many $(a,b)$, $$\gcd ({f_{11}(a) \cdots f_{1l}(a), f_{21}(a) \cdots f_{2m}(a)})>1,$$
then by division algorithm, we will have a non constant common factor of the polynomials $f_{11}(x) \cdots f_{1l}(x)$ and $f_{21}(x) \cdots f_{2m}(x)$ which is 
a contradiction to the hypothesis (3) of the theorem. This establishes equation (\ref{subbugcd1}).
Since $f_{11}(x) \cdots f_{1l}(x)$, $f_{21}(x) \cdots f_{2m}(x)$ and $g_1(y) \cdots g_n(y)$ have positive leading coefficients, we get positive integers $N_1$ and $N_2$ such that for all integers $x\geq N_1$ and $y\geq N_2$, $f_{11}(x) \cdots f_{1l}(x)$, $f_{21}(x) \cdots f_{2m}(x)$ and $g_1(y) \cdots g_n(y)$ are positive integers. Let $\epsilon>0$ be a real number such that 
$$\epsilon\deg (f_{11}(x) \cdots f_{1l}(x)f_{21}(x) \cdots f_{2m}(x)g_1(x) \cdots g_n(x))<1.$$
\noin {\bf Case i.} Assume that 
$$f_{11}(a)^{\alpha_1}f_{12}(a)^{\alpha_2}\cdots f_{1l}(a)^{\alpha_l}+ f_{21}(a)^{\beta_1}f_{22}(a)^{\beta_2}\cdots f_{2m}(a)^{\beta_m}=g(b)$$ for infinitely
 many $(a, b) \in \N \times \N$ with
\begin{enumerate}
 \item $a\geq N_{1}$ and $b \geq N_{2}$
 \item $f_{11}(a)^{\alpha_1}f_{12}(a)^{\alpha_2}\cdots f_{1l}(a)^{\alpha_l}, f_{21}(a)^{\beta_1}f_{22}(a)^{\beta_2}\cdots f_{2m}(a)^{\beta_m},g(b) \in \N $
\item $f_{11}(a)^{\alpha_1}f_{12}(a)^{\alpha_2}\cdots f_{1l}(a)^{\alpha_l}, f_{21}(a)^{\beta_1}f_{22}(a)^{\beta_2}\cdots f_{2m}(a)^{\beta_m},g(b)$ are coprimes.  
\end{enumerate}
So by $ABC$ Conjecture, for a fixed $\epsilon>0$, there exists a constant $c_{1}$ such that 
$$ g(b) \leq c_{1}rad\left( f_{11}(a)^{\alpha_1}f_{12}(a)^{\alpha_2}\cdots f_{1l}(a)^{\alpha_l}f_{21}(a)^{\beta_1}
\cdots f_{2m}(a)^{\beta_m}g(b)\right) ^{1+ \epsilon } $$ 
$$= c_{1}rad\left( f_{11}(a)^{\alpha_1}\cdots f_{1l}(a)^{\alpha_l}f_{21}(a)^{\beta_1}\cdots f_{2m}(a)^{\beta_m}g_1(b)^{\gamma_1} 
\cdots g_n(b)^{\gamma_n}\right) ^{1+ \epsilon }.$$
So
$$g(b) \leq c_{1}rad\left( f_{11}(a)f_{12}(a)\cdots f_{1l}(a)f_{21}(a)f_{22}(a)\cdots f_{2m}(a)g_1(b)\cdots g_n(b)\right) ^{1+ \epsilon }.$$
Therefore 
$$g(b) \leq c_{1}\left( f_{11}(a)f_{12}(a)\cdots f_{1l}(a)f_{21}(a)f_{22}(a)\cdots f_{2m}(a)g_1(b)\cdots g_n(b)\right) ^{1+ \epsilon }.$$
\noin{\bf Case ii.} Assume that
\begin{equation}
f_{11}(a)^{\alpha_1}f_{12}(a)^{\alpha_2}\cdots f_{1l}(a)^{\alpha_l} - f_{21}(a)^{\beta_1}f_{22}(a)^{\beta_2}\cdots f_{2m}(a)^{\beta_m}=g(b)
\label{equation4}
\end{equation}
for infinitely many $(a, b) \in \N \times \N$ with
\begin{enumerate}
 \item $a\geq N_{1}$ and $b \geq N_{2}$
 \item $f_{11}(a)^{\alpha_1}f_{12}(a)^{\alpha_2}\cdots f_{1l}(a)^{\alpha_l}, f_{21}(a)^{\beta_1}f_{22}(a)^{\beta_2}\cdots f_{2m}(a)^{\beta_m},g(b) \in \N $
\item $f_{11}(a)^{\alpha_1}f_{12}(a)^{\alpha_2}\cdots f_{1l}(a)^{\alpha_l}, f_{21}(a)^{\beta_1}f_{22}(a)^{\beta_2}\cdots f_{2m}(a)^{\beta_m},g(b)$ are coprimes.  
\end{enumerate}
So by $ABC$ Conjecture, for  a fixed $\epsilon>0$, there exists a constant $c_2$ such that 
$$ f_{11}(a)^{\alpha_1}\cdots f_{1l}(a)^{\alpha_l}\leq c_2rad\left( f_{11}(a)^{\alpha_1}
\cdots f_{1l}(a)^{\alpha_l}f_{21}(a)^{\beta_1}\cdots f_{2m}(a)^{\beta_m}g(b)\right) ^{1+ \epsilon } $$ 
$$= c_2rad\left( f_{11}(a)^{\alpha_1}\cdots f_{1l}(a)^{\alpha_l}f_{21}(a)^{\beta_1}
\cdots f_{2m}(a)^{\beta_m}g_1(b)^{\gamma_1} \cdots g_n(b)^{\gamma_n}\right) ^{1+ \epsilon }.$$
Also as by equation (\ref{equation4}), $g(b)\leq f_{11}(a)^{\alpha_1}\cdots f_{1l}(a)^{\alpha_l}$, we have
$$ g(b) \leq c_{2}rad\left( f_{11}(a)^{\alpha_1}\cdots f_{1l}(a)^{\alpha_l}f_{21}(a)^{\beta_1}
\cdots f_{2m}(a)^{\beta_m}g(b)\right) ^{1+ \epsilon } $$ 
$$= c_2rad\left( f_{11}(a)^{\alpha_1}\cdots f_{1l}(a)^{\alpha_l}f_{21}(a)^{\beta_1}
\cdots f_{2m}(a)^{\beta_m}g_1(b)^{\gamma_1} \cdots g_n(b)^{\gamma_n}\right) ^{1+ \epsilon }.$$
So
$$g(b) \leq c_{2}rad\left( f_{11}(a)f_{12}(a)\cdots f_{1l}(a)f_{21}(a)f_{22}(a)\cdots f_{2m}(a)g_1(b)\cdots g_n(b)\right) ^{1+ \epsilon }.$$
Therefore 
$$g(b) \leq c_{2}\left( f_{11}(a)f_{12}(a)\cdots f_{1l}(a)f_{21}(a)f_{22}(a)\cdots f_{2m}(a)g_1(b)\cdots g_n(b)\right) ^{1+ \epsilon }.$$
So from Case i and Case ii, we get 
\begin{equation*}
 g(b) \leq c\left( f_{11}(a)f_{12}(a)\cdots f_{1l}(a)f_{21}(a)f_{22}(a)\cdots f_{2m}(a)g_1(b)\cdots g_n(b)\right) ^{1+ \epsilon },
\label{equation 5}
\end{equation*}
where $c = max \left \lbrace c_{1}, c_{2}\right\rbrace $.
If $b < a$ for infinitely many $(a, b) \in \N \times \N $, then $f(b) < f(a)$, since leading coefficients of $f(x)$ is positive. Therefore
$$f(b) < f(a) = g(b),$$
so
$$f(b) < g(b).$$
This implies that $\deg g(x) > \deg f(x)$. This contradicts to $(i)$. So $b \geq a $ for infinitely many $(a, b) \in \N \times \N$. Therefore from (5), we can 
write $$ g(b) \leq c\left( f_{11}(b)f_{12}(b)\cdots f_{1l}(b)f_{21}(b)f_{22}(b)\cdots f_{2m}(b)g_1(b)\cdots g_n(b)\right) ^{1+ \epsilon } $$ for infinitely 
many positive integers $b$, since $ f_{11}(a)f_{12}(a)\cdots f_{1l}(a)$ and\\ $f_{21}(a)f_{22}(a)\cdots f_{2m}(a)$ have positive leading coefficients. So 
\begin{eqnarray*}
 g(b) &\leq& c\left( f_{11}(b)\cdots f_{1l}(b)f_{21}(b)\cdots f_{2m}(b)g_1(b)\cdots g_n(b)\right)\\
&&\times \left( f_{11}(a)\cdots f_{1l}(a)f_{21}(a)\cdots f_{2m}(a)g_1(b)\cdots g_n(b)\right)^{\epsilon}.
\end{eqnarray*}
Also as for $b\geq M$ for some positive integer $M$, 
{\tiny$$f_{11}(a)\cdots f_{1l}(a)f_{21}(a)\cdots f_{2m}(a)g_1(b)\cdots g_n(b)\leq b^{(\deg(f_{11}(x)\cdots f_{1l}(x)f_{21}(x)
\cdots f_{2m}(x)g_1(x)\cdots g_n(x))},$$}
we have
\begin{eqnarray*}
g(b) &\leq& c\left( f_{11}(b)\cdots f_{1l}(b)f_{21}(b)\cdots f_{2m}(b)g_1(b)\cdots g_n(b)\right)\\
&&\times b^{\epsilon \deg \left(f_{11}(x)\cdots f_{1l}(x)f_{21}(x)\cdots f_{2m}(x)g_1(x)\cdots g_n(x)\right)}\\
g(b) &\leq& c\left( f_{11}(b)\cdots f_{1l}(b)f_{21}(b)\cdots f_{2m}(b)g_1(b)\cdots g_n(b)\right)b. 
\end{eqnarray*}
Since $\epsilon \deg \left(f_{11}(x)f_{12}(x)\cdots f_{1l}(x)f_{21}(x)f_{22}(x)\cdots f_{2m}(x)g_1(x)\cdots g_n(x)\right) < 1$,
$\deg g(y)< 1+ \deg (f_{11}(x)f_{12}(x) \cdots f_{1l}(x) f_{21}(x) \cdots f_{2m}(x) g_1(x) \cdots g_n(x))$. This is a contradiction to (ii). 
So our assumption is wrong. There are only finitely many positive integral solutions $(x, y)$ for (\ref{e1}).
Second we shall prove that there are only finitely many integral solutions $(x, y) = (a, b)$ for (\ref{e1}). Let $(a, b)$ be any integral solutions for (\ref{e1}). 
Then $(a, b)$ can be written in the form $(\pm a_{1}, \pm b_{1})$, where $a_{1}, b_{1} \in \N$. Now we replace $(x, y)$  by $(\pm x_{1}, \pm y_{1})$ in (\ref{e1}). 
Then we get that
$$ f(\pm x) = g(\pm y).$$
That is,
$$f_{11}(\pm x)^{\alpha_1}\cdots f_{1l}(\pm x)^{\alpha_l}\pm f_{21}(\pm x)^{\beta_1}f_{22}(\pm x)^{\beta_2}
\cdots f_{2m}(\pm x)^{\beta_m}=g_1(\pm y) \cdots g_n(\pm y).$$
We can reduce this equation as either 
 $$f_{111}(x)^{\alpha_1}\cdots f_{11l}(x)^{\alpha_l}\pm f_{121}(x)^{\beta_1}
\cdots f_{12m}(x)^{\beta_m}=g_{11}(y) \cdots g_{1n}(y).$$
or
 $$f_{121}(x)^{\beta_1}\cdots f_{12m}(x)^{\beta_m} \pm f_{111}(x)^{\alpha_1}\cdots f_{11l}(x)^{\alpha_l}=g_{11}(y) 
\cdots g_{1n}(y).$$
It is clear that the above two equations satisfy the conditions (1), (2), (3) and (4). So the above two equations have only finitely many integral solutions. 
This proves the theorem.$\hfill\square$
\vskip.5mm
\noin{\bf Proof of Theorem \ref{abcthm2}}:
Suppose that there are infinitely many solutions $(x,y)\in\Z\times\Z$ for equation $(\ref{e1})$. First we assume that the equation $(\ref{e1})$ has 
infinitely many solutions $(a,b)$ in positive integers. Then $$f(a)=g(b)$$ for infinitely many $(a,b)\in\N\times\N$. Therefore 
\begin{equation}
f_{11}(a)^{\alpha_1}f_{12}(a)^{\alpha_2}\cdots f_{1l}(a)^{\alpha_l}\pm f_{21}(a)^{\beta_1}f_{22}(a)^{\beta_2}\cdots f_{2m}(a)^{\beta_m}=g(b)
\label{subburameq}
\end{equation}
for infinitely many $(a,b)\in\N\times\N$.
Now we shall prove that for infinitely many $(a,b)\in \N\times \N$, 
\begin{equation}
\gcd ({f_{11}(a) \cdots f_{1l}(a), f_{21}(a) \cdots f_{2m}(a)})=1.
\label{subbugcd}
\end{equation}
Suppose for infinitely many $(a,b)$, $$\gcd ({f_{11}(a) \cdots f_{1l}(a), f_{21}(a) \cdots f_{2m}(a)})>1,$$
then by division algorithm, we will have a non constant common factor of the polynomials $f_{11}(x) \cdots f_{1l}(x)$ and $f_{21}(x) \cdots f_{2m}(x)$ which 
is a contradiction to the hypothesis $(3)$ of the theorem. This establishes equation $(\ref{subbugcd})$.
Since $f_{11}(x) \cdots f_{1l}(x)$, $f_{21}(x) \cdots f_{2m}(x)$ and $g_1(y) \cdots g_n(y)$ have positive leading coefficients, we get positive integers $N_1$ 
and $N_2$ such that for all integers $x\geq N_1$ and $y\geq N_2$, $f_{11}(x) \cdots f_{1l}(x)$, $f_{21}(x) \cdots f_{2m}(x)$ and $g_1(y) \cdots g_n(y)$ are 
positive integers. Let $\epsilon>0$ be a real number such that 
$$\epsilon\deg (f_{11}(x) \cdots f_{1l}(x)f_{21}(x) \cdots f_{2m}(x)g_1(x) \cdots g_n(x))<1.$$
\noin{\bf Case i.} Assume that 
$$f_{11}(a)^{\alpha_1}f_{12}(a)^{\alpha_2}\cdots f_{1l}(a)^{\alpha_l}+ f_{21}(a)^{\beta_1}f_{22}(a)^{\beta_2}\cdots f_{2m}(a)^{\beta_m}=g(b)$$ for infinitely 
many $(a, b) \in \N \times \N$ with
\begin{enumerate}
 \item $a\geq N_{1}$ and $b \geq N_{2}$
 \item $f_{11}(a)^{\alpha_1}\cdots f_{1l}(a)^{\alpha_l}, f_{21}(a)^{\beta_1}\cdots f_{2m}(a)^{\beta_m},g(b) \in \N $
\item $f_{11}(a)^{\alpha_1}\cdots f_{1l}(a)^{\alpha_l}, f_{21}(a)^{\beta_1}\cdots f_{2m}(a)^{\beta_m},g(b)$ are coprimes.  
\end{enumerate}
So by $ABC$ Conjecture, for any fixed $\epsilon>0$, there exists a constant $c_{1}$ such that 
\begin{eqnarray*}
g(b) &\leq& c_{1}rad\left( f_{11}(a)^{\alpha_1}\cdots f_{1l}(a)^{\alpha_l}f_{21}(a)^{\beta_1}\cdots f_{2m}(a)^{\beta_m}g(b)\right) ^{1+ \epsilon }\\  
&=& c_{1}rad\left( f_{11}(a)^{\alpha_1}\cdots f_{1l}(a)^{\alpha_l}f_{21}(a)^{\beta_1}
\cdots f_{2m}(a)^{\beta_m}g_1(b)^{\gamma_1} \cdots g_n(b)^{\gamma_n}\right) ^{1+ \epsilon }.
\end{eqnarray*}
 So
$$g(b) \leq c_{1}rad\left( f_{11}(a)\cdots f_{1l}(a)f_{21}(a)\cdots f_{2m}(a)g_1(b)\cdots g_n(b)\right) ^{1+ \epsilon }.$$
Therefore 
$$g(b) \leq c_{1}\left( f_{11}(a)\cdots f_{1l}(a)f_{21}(a)\cdots f_{2m}(a)g_1(b)\cdots g_n(b)\right) ^{1+ \epsilon }.$$
\noin{\bf Case ii.} Assume that
\begin{equation}
f_{11}(a)^{\alpha_1}\cdots f_{1l}(a)^{\alpha_l} - f_{21}(a)^{\beta_1}\cdots f_{2m}(a)^{\beta_m}=g(b)
\label{equation5}
\end{equation}
for infinitely many $(a, b) \in \N \times \N$ with
\begin{enumerate}
 \item $a\geq N_{1}$ and $b \geq N_{2}$
 \item $f_{11}(a)^{\alpha_1}\cdots f_{1l}(a)^{\alpha_l}, f_{21}(a)^{\beta_1}\cdots f_{2m}(a)^{\beta_m},g(b) \in \N $
\item $f_{11}(a)^{\alpha_1}\cdots f_{1l}(a)^{\alpha_l}, f_{21}(a)^{\beta_1}\cdots f_{2m}(a)^{\beta_m},g(b)$ are coprimes.  
\end{enumerate}
So by $ABC$ Conjecture, for any fixed $\epsilon>0$, there exists a constant $c_{2}$ such that 
$$ f_{11}(a)^{\alpha_1}\cdots f_{1l}(a)^{\alpha_l}\leq c_2rad\left( f_{11}(a)^{\alpha_1}
\cdots f_{1l}(a)^{\alpha_l}f_{21}(a)^{\beta_1}\cdots f_{2m}(a)^{\beta_m}g(b)\right) ^{1+ \epsilon } $$ 
$$= c_2rad\left( f_{11}(a)^{\alpha_1}\cdots f_{1l}(a)^{\alpha_l}f_{21}(a)^{\beta_1}
\cdots f_{2m}(a)^{\beta_m}g_1(b)^{\gamma_1} \cdots g_n(b)^{\gamma_n}\right) ^{1+ \epsilon }.$$
Also as by equation (\ref{equation5}) $g(b)\leq f_{11}(a)^{\alpha_1}\cdots f_{1l}(a)^{\alpha_l}$, we have
\begin{eqnarray*}
g(b) &\leq& c_{2}rad\left( f_{11}(a)^{\alpha_1}\cdots f_{1l}(a)^{\alpha_l}f_{21}(a)^{\beta_1}
\cdots f_{2m}(a)^{\beta_m}g(b)\right) ^{1+ \epsilon }\\ 
&=& c_2rad\left( f_{11}(a)^{\alpha_1}\cdots f_{1l}(a)^{\alpha_l}f_{21}(a)^{\beta_1}
\cdots f_{2m}(a)^{\beta_m}g_1(b)^{\gamma_1} \cdots g_n(b)^{\gamma_n}\right) ^{1+ \epsilon }.
\end{eqnarray*}
So
$$g(b) \leq c_{2}rad\left( f_{11}(a)\cdots f_{1l}(a)f_{21}(a)\cdots f_{2m}(a)g_1(b)\cdots g_n(b)\right) ^{1+ \epsilon }.$$
Therefore 
$$g(b) \leq c_{2}\left( f_{11}(a)\cdots f_{1l}(a)f_{21}(a)\cdots f_{2m}(a)g_1(b)\cdots g_n(b)\right) ^{1+ \epsilon }.$$
So from Case i and Case ii, we get 
$$ g(b) \leq c\left( f_{11}(a)\cdots f_{1l}(a)f_{21}(a)\cdots f_{2m}(a)g_1(b)\cdots g_n(b)\right) ^{1+ \epsilon },$$
where $c = \max \left \lbrace c_{1}, c_{2}\right\rbrace $.
\begin{equation}
 f(a) \leq c\left( f_{11}(a)\cdots f_{1l}(a)f_{21}(a)\cdots f_{2m}(a)g_1(b)\cdots g_n(b)\right) ^{1+ \epsilon },
\label{eq5}
\end{equation}
since $$f(a) = g(b).$$
If $b > a$ for infinitely many $(a, b) \in \N \times \N $, then $f(b) > f(a)$, since leading coefficients of $f(x)$ is positive. Therefore
$$f(b) > f(a) = g(b).$$
So
$$f(b) > g(b).$$
This implies that $\deg g(x) < \deg f(x)$. This contradicts to $(i)$. So $b \leq a $ for infinitely many $(a, b) \in \N \times \N$. 
Also since $ f_{11}(x)\cdots f_{1l}(x)$ and $f_{21}(x)\cdots f_{2m}(x)$ have positive leading coefficients, by equation (\ref{eq5}), 
we get
\begin{eqnarray*} 
 f(a) &\leq& c\left( f_{11}(a)\cdots f_{1l}(a)f_{21}(a)\cdots f_{2m}(a)g_1(a)\cdots g_n(a)\right)\\
&&\times \left( f_{11}(a)\cdots f_{1l}(a)f_{21}(a)\cdots f_{2m}(a)g_1(b)\cdots g_n(b)\right)^{\epsilon}.
\end{eqnarray*}
Thus as for $a\geq M$ for some positive integer $M$, {\tiny$$f_{11}(a)\cdots f_{1l}(a)f_{21}(a)\cdots f_{2m}(a)g_1(b)\cdots g_n(b)\leq a^{\deg (f_{11}(x)\cdots f_{1l}(x)f_{21}(x)
\cdots f_{2m}(x)g_1(x)\cdots g_n(x))},$$} we have 
\begin{eqnarray*}
 f(a) &\leq& c\left( f_{11}(a)\cdots f_{1l}(a)f_{21}(a)\cdots f_{2m}(a)g_1(a)\cdots g_n(a)\right)\\
&&\times b^{\epsilon \deg \left(f_{11}(x)\cdots f_{1l}(x)f_{21}(x)\cdots f_{2m}(x)g_1(x)\cdots g_n(x)\right)}\\
f(a) &\leq& c\left( f_{11}(a)\cdots f_{1l}(a)f_{21}(a)\cdots f_{2m}(a)g_1(a)\cdots g_n(a)\right)a.
\end{eqnarray*}

\noin Since $\epsilon \deg \left(f_{11}(x)\cdots f_{1l}(x)f_{21}(x)\cdots f_{2m}(x)g_1(x)\cdots g_n(x)\right) < 1$,
$\deg f(x) < 1+ \deg (f_{11}(x) \cdots f_{1l}(x) f_{21}(x) \cdots f_{2m}(x) g_1(x) \cdots g_n(x))$. This is a contradiction to (ii). 
So there can be only finitely many positive integral solutions $(x, y)$ for (\ref{e1}).
Second we shall prove that there are only finitely many integral solutions $(x, y) = (a, b)$ for (\ref{e1}). Let $(a, b)$ be any integral solutions for (\ref{e1}). 
Then $(a, b)$ can be written as the form $(\pm a_{1}, \pm b_{1})$, where $a_{1}, b_{1} \in \N$. Now we replace (x, y)  by $(\pm x_{1}, \pm y_{1})$ in (\ref{e1}). 
Then we get that
$$ f(\pm x) = g(\pm y).$$
That is,
$$f_{11}(\pm x)^{\alpha_1}\cdots f_{1l}(\pm x)^{\alpha_l}\pm f_{21}(\pm x)^{\beta_1}\cdots f_{2m}(\pm x)^{\beta_m}=g_1(\pm y) 
\cdots g_n(\pm y)).$$
We can reduce this equation as either 
 $$f_{111}(x)^{\alpha_1}\cdots f_{11l}(x)^{\alpha_l}\pm f_{121}(x)^{\beta_1}\cdots f_{12m}(x)^{\beta_m}=g_{11}(y) \cdots g_{1n}(y)).$$
or
 $$f_{121}(x)^{\beta_1}\cdots f_{12m}(x)^{\beta_m} \pm f_{111}(x)^{\alpha_1}\cdots f_{11l}(x)^{\alpha_l}=g_{11}(y) \cdots g_{1n}(y)).$$
It is clear that the above two equations satisfy the conditions (1), (2), (3) and (4). So the above two equations have only finitely many integral solutions. 
This proves theorem.$\hfill\square$
\vskip.5mm

\noin{\bf Proof of Theorem \ref{abcthm3}}: Suppose there are infinitely many solutions $(x, y) = (a, b)$ in integers. Then $$ f(a) = g(b),$$ for infinitely many 
$(a, b) \in \Z \times \Z $. 
That is for infinitely many integers $a, b\in\Z$, we have
 $$ f(a) = g_1(b)^{\gamma_1}g_2(b)^{\gamma_2}\cdots g_n(b)^{\gamma_n}.$$ Since $\gamma_1$, ..., $\gamma_n$ are positive integers $\geq 2$. So there exists a 
positive integer $N$ such that for every integer $b > N$, $g_1(b)^{\gamma_1}g_2(b)^{\gamma_2}\cdots g_n(b)^{\gamma_n}$ is a powerful number, while 
$f(x)$ being separable of $\deg >2$, by Remark \ref{remark}, $f(x)$ can be powerful numbers only for finitely many integers $x$.  
Which is a contradiction. So our assumption is wrong.
$\hfill\square$
\vskip.5mm
\noin{\bf Acknowledgments}: The second author would like to thank the Harish Chandra Research Institute, Allahabad 
for financial support during his research study.

\end{document}